\newtheorem{theorem}{Theorem}
\newtheorem{lemma}{Lemma}
\newtheorem{corollary}{Corollary}
\newdefinition{remark}{Remark}
\newproof{proof}{Proof}
\journal{Journal of \LaTeX\ Templates}
\begin{document}

\begin{frontmatter}

\title{Randomized block subsampling Kaczmarz-Motzkin method\tnoteref{mytitlenote}}
\tnotetext[mytitlenote]{The work is supported by the National Natural Science Foundation of China (No. 11671060) and the Natural Science Foundation of Chongqing, China (No. cstc2019jcyj-msxmX0267)}

\author{Yanjun Zhang}
\author{Hanyu Li\corref{mycor}}
\cortext[mycor]{Corresponding author}
\ead{lihy.hy@gmail.com or hyli@cqu.edu.cn}

\address{College of Mathematics and Statistics, Chongqing University, Chongqing 401331, P.R. China}
\begin{abstract}

 By introducing a subsampling strategy, we propose a randomized block Kaczmarz-Motzkin method for solving linear systems. Such strategy not only determines the block size, but also combines and extends two famous strategies, i.e.,  randomness and greed, and hence can inherit their advantages. Theoretical analysis shows that the proposed method converges linearly in expectation to the least-Euclidean-norm solution. Several numerical examples are reported to verify the efficiency and feasibility of the new method.
\end{abstract}

\begin{keyword}
Kaczmarz method; Motzkin method; block updating; subsampling strategy
\MSC[2010] 65F10, 65F20
\end{keyword}

\end{frontmatter}


\section{Introduction}
\label{sec;introduction}
Consider the consistent linear system
\begin{align}
Ax=b,  \label{Sec11}
\end{align}
where $A\in \mathbb{R}^{m\times n}$ with $m\geq n $, $b\in \mathbb{R}^{m}$, and $x$ is an unknown vector. A popular method for solving the problem is the Kaczmarz method \cite{kaczmarz1}, which is an effective row-action iteration solver. Due to its simplicity, speediness, and low memory usage, various variants including randomized versions \cite{Strohmer2009,Completion2015,gower2015randomized}, acceleration versions \cite{Eldar2011,liu2016accelerated,jiao2017preasymptotic} and greedy versions \cite{de2017sampling,bai2018greedy,gower2021adaptive} have been widely discussed and studied over the past decades. Besides, in many computer architectures, the block Kaczmarz iterative updating is more efficient than the simple Kaczmarz method. This is because the former adopts a subset of rows of the coefficient matrix in each iteration while the latter only utilizes one row. 
So, how to select a good set of indices and devise an efficient block method is also an active topic. 

In 2014, Needell and Tropp \cite{needell2014paved} constructed a randomized block Kaczmarz (RBK) method, which 
selects an index subset $\tau$ uniformly at random from $[m]=\{1, 2, \cdots, m\}$. 
There are two weaknesses of the RBK method. 
One is that it is possible to select the same index subset twice consecutively and then result in no progress in the second iteration, and the other one 
is that this method requires a good row paving, which is difficult to achieve when the range of row norms of $A$ is large. In addition, there may be a lot of extra costs associated with building good paving.

Subsequently, many greedy-type block algorithms \cite{Niu2020,liu2021greedy,zhang2021block,Zhang2022MK} were 
proposed to make up for the two shortcomings mentioned above. A large number of numerical experiments in the literature verify that the greedy versions indeed have faster convergence speed. However, all these greedy block methods require at least one full pass over the coefficient matrix at each iteration, which leads to 
high per-iteration costs in terms of memory and computation when the system is large. Moreover, 
some of them, like the GBK method in \cite{Niu2020}, the BEM method in \cite{liu2021greedy}, the BSKM1 method in \cite{zhang2021block}, and the GMBK method in \cite{Zhang2022MK}, cannot control the size of the index subset in each iteration, which may result in extreme cases. That is, it is possible to have either only one index or all indices in the subset. This 
is inconsistent with the philosophy of the block iteration.

In this paper, we introduce a subsampling strategy which combines the randomized and greedy block strategies discussed above, and construct a 
randomized block subsampling Kaczmarz-Motzkin (RB-SKM) method. 
Specifically, the RB-SKM method first selects an index set from $[m]$ randomly like the RBK method, and then uses the greedy strategy to determine the final iterative index subset with definite block size within this randomized sampled constraints. So, compared to the existing block methods, our method has three obvious advantages at the same time: the randomness indicates a very cheap 
cost per iteration, the greed implies a fast convergence rate and the determined block size means the effectiveness of the block iteration.

The rest of this paper is organized as follows. In Section \ref{sec: RB-SKM}, we propose the RB-SKM method, give its convergence analysis, and discuss the relationships between our method and the existing ones.  Section \ref{sec:experiments} is devoted to 
the experimental results. Finally, we conclude this paper with some remarks.

\section{The RB-SKM method}
\label{sec: RB-SKM}

Before presenting our method, we first provide some preparations. 
For a matrix $A=(A_{(i, j)})\in \mathbb{R}^{m\times n}$, $A^{(i)}$, $\text{range}(A)$, $\text{nnz}(A)$, $A^{\dag}$, and $A_{\tau}$ denote its $i$th row, column space, nonzero elements counts, Moore-Penrose pseudoinverse, and the restriction onto the row indices in the set $\tau$, respectively. We let the positive eigenvalues of $A^{T}A$ be always arranged in algebraically nonincreasing order: $\lambda_{\max }(A^{T}A)=\lambda_{1}(A^{T}A) \geq \lambda_{2}(A^{T}A)  \geq \cdots  \geq \lambda_{\min }^{+}(A^{T}A)>0,$ and use $|\tau|$, $ \mathbb{E}^{k}$ and $ \mathbb{E}$ to denote the number of elements of a set $\mathcal{\tau}$, the conditional expectation conditioned on the first $k$ iterations and the full expected value, respectively. In addition, the following lemma is necessary throughout the paper.
\begin{lemma}
\label{lem-1}(\cite{horn2012matrix})
Let $A \in \mathbb{R}^{n\times n}$ be symmetric and $A_t \in \mathbb{R}^{t\times t}$ be its principal submatrix. Then
$$\lambda_{n-t+i}(A) \leqslant \lambda_{i}\left(A_{t}\right) \leqslant \lambda_{i}(A), \quad i=1, \cdots, t.$$
\end{lemma}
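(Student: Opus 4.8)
The plan is to prove the interlacing inequalities through the Courant--Fischer min--max characterization of the eigenvalues of a symmetric matrix. First I would reduce to the case in which $A_t$ is the \emph{leading} $t\times t$ principal submatrix of $A$: any $t\times t$ principal submatrix can be brought to the leading position by a symmetric permutation $A\mapsto\Pi^{T}A\Pi$ with $\Pi$ a permutation matrix, and this orthogonal similarity alters neither the spectrum of $A$ nor that of the submatrix. In this normalized situation let $P\in\mathbb{R}^{n\times t}$ be the matrix whose columns are the first $t$ standard basis vectors of $\mathbb{R}^n$; then $A_t=P^{T}AP$, the map $y\mapsto Py$ is injective, and it preserves the Rayleigh quotient in the sense that $(Py)^{T}A(Py)=y^{T}A_ty$ and $\|Py\|_2=\|y\|_2$ for every $y\in\mathbb{R}^t$. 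Consequently, for any subspace $S\subseteq\mathbb{R}^t$ the image $PS$ has the same dimension as $S$, and the Rayleigh quotient of $A$ on $PS\setminus\{0\}$ takes exactly the values of the Rayleigh quotient of $A_t$ on $S\setminus\{0\}$.

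For the upper bound $\lambda_i(A_t)\le\lambda_i(A)$ I would use the max--min form, which for a $d\times d$ symmetric matrix $M$ reads
$$\lambda_k(M)=\max_{\dim S=k}\ \min_{0\neq x\in S}\ \frac{x^{T}Mx}{x^{T}x}.$$
Let $S_\star\subseteq\mathbb{R}^t$ be an $i$-dimensional subspace attaining the maximum for $M=A_t$. Then $PS_\star$ is an $i$-dimensional subspace of $\mathbb{R}^n$, and by the remarks above $\min_{0\neq x\in PS_\star}x^{T}Ax/x^{T}x=\min_{0\neq y\in S_\star}y^{T}A_ty/y^{T}y=\lambda_i(A_t)$. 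Since $\lambda_i(A)$ is the maximum over all $i$-dimensional subspaces of such minimal Rayleigh quotients, $\lambda_i(A)\ge\lambda_i(A_t)$.

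For the lower bound $\lambda_{n-t+i}(A)\le\lambda_i(A_t)$ I would use the dual form $\lambda_k(M)=\min_{\dim S=d-k+1}\max_{0\neq x\in S}x^{T}Mx/x^{T}x$. Applied to $A_t$ with $k=i$ it ranges over $(t-i+1)$-dimensional subspaces of $\mathbb{R}^t$; applied to $A$ with $k=n-t+i$ it ranges over subspaces of $\mathbb{R}^n$ of dimension $n-(n-t+i)+1=t-i+1$ --- the same dimension, which is precisely what makes the comparison go through. Taking $S_\star\subseteq\mathbb{R}^t$ optimal for $A_t$ and pushing it forward by $P$ yields a $(t-i+1)$-dimensional subspace of $\mathbb{R}^n$ on which the maximal Rayleigh quotient of $A$ equals $\lambda_i(A_t)$, so the minimum over all such subspaces obeys $\lambda_{n-t+i}(A)\le\lambda_i(A_t)$.

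I do not expect a genuine obstacle here. The two points that require care are the index bookkeeping in the min--max formulas --- matching the subspace dimension $t-i+1$ on both sides so that exactly the $(n-t+i)$-th eigenvalue of $A$ appears --- and the verification that $P$ being injective makes $\dim(PS)=\dim S$, so that the subspaces constructed on the $\mathbb{R}^n$ side are legitimate competitors in the variational formula. A possible alternative is induction on the gap $n-t$, removing one index at a time and establishing the one-step interlacing $\lambda_{i+1}(A)\le\lambda_i(A_{n-1})\le\lambda_i(A)$; the direct argument above, however, disposes of an arbitrary gap in a single application of Courant--Fischer.
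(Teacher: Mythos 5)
Your proof is correct: the reduction to the leading principal submatrix by a symmetric permutation, the isometric embedding $P$ preserving Rayleigh quotients, and the index bookkeeping in both the max--min and min--max forms of Courant--Fischer all check out. The paper itself gives no proof of this lemma --- it is quoted as a standard result from Horn and Johnson --- and your argument is precisely the classical Courant--Fischer proof of Cauchy's interlacing theorem found in that reference, so there is nothing to reconcile.
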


Now, we give the RB-SKM method shown in Algorithm \ref{Al: The RB-SKM method} for solving the problem (\ref{Sec11}). Specifically, the method operates by randomly sampling an index set $\tau_k$ from $[ m ]$, computing the residual of this set, and projecting onto the constraints corresponding to the
$\delta$ largest magnitude entries of this subresidual.

\begin{algorithm}
\caption{The RB-SKM method}
\label{Al: The RB-SKM method}
\begin{algorithmic}[1]
\STATE{Input: parameters $\beta,~\delta \in [m]$, the initial estimate $x^0\in \mathbb{R}^n$ and $r^0=b-Ax^0$.}
\FOR{$k=0, 1, 2, \cdots $ until convergence,}
\STATE{Select an index set $\tau_k$ of size $\beta$ uniformly at random from $[ m ]$.
}
\STATE{Compute $\widetilde{r}_{\tau_k}=\lvert r^k_{\tau_k}\rvert$.}
\STATE{From 
the set $\tau_k$, determine the index subset $ \mathcal{I}_{k}$ of size $\delta $ satisfying $\widetilde{r}^{(i)}_{\tau_k}\geq \widetilde{r}^{(j)}_{\tau_k}$, where $i\in \mathcal{I}_{k}$ and $j\in {\tau_k \backslash\mathcal{I}_{k}}$.}
\STATE{Compute $d^k=A_{\mathcal{I}_{k}}^{\dagger}(b_{\mathcal{I}_{k}}-A_{\mathcal{I}_{k}}x^{k})$. }
\STATE{Update $x^{k+1}=x^{k}+d^k $ and $r^{k+1}=r^{k}-Ad^k$. }
\ENDFOR
\end{algorithmic}
\end{algorithm}

\begin{remark}
\label{Al_rmk1}
By varying the parameters $\beta$ and $\delta$, some existing methods can be recovered as the special cases of the RB-SKM method; see 
Table \ref{Al_rm1:tab1} for details.
 \begin{table}[]
\centering
   \fontsize{8}{8}\selectfont
       \caption{Summary of special methods of the RB-SKM method for different parameters $\beta$ and $\delta$.}
    \label{Al_rm1:tab1}
    \begin{threeparttable}
    \begin{tabular}{c c c c c c}
 \hline
Method  & BSKM1\tnote{*}~ \cite{zhang2021block}& Motzkin \cite{Motzkin54}& RBK \cite{needell2014paved} & SKM \cite{de2017sampling}
& RK\tnote{**}~~ \cite{Strohmer2009}             \cr \hline
$\beta$          & m                         & m                       & $\beta\in[m]$                        & $\beta\in[m]$
& 1     \cr
$\delta $ & $\delta\in[m]$                         & 1                      & $\beta $                      & 1
& 1                         \cr\hline
\end{tabular}
\begin{tablenotes}
\footnotesize
\item[*] Assume that the cardinality of the index subsets equals to $\delta$ per iteration.
\item[**] The randomized Kaczmarz method with equal probability.
\end{tablenotes}
\end{threeparttable}
\end{table}
\end{remark}

\begin{remark}
\label{Al_rmk2}
The updating formula of the RB-SKM method in Algorithm \ref{Al: The RB-SKM method} can also be replaced by a pseudoinverse-free version presented in \cite{necoara2019faster}, i.e.,
\begin{equation*}
x^{k+1}=x^{k}-\alpha_{k}\left(\sum_{i \in {\mathcal{I}_{k}}} w_{i} \frac{A^{(i)} x^{k}-b^{(i)}}{\left\|A^{(i)}\right\|^{2}_2} (A^{(i)})^{T}\right),
\end{equation*}
and its special variants widely discussed in \cite{du2020randomized,du2021doubly,chen2022fast}.
\end{remark}

Below, we provide the convergence analysis for the RB-SKM method.
\begin{theorem}
\label{th:RB-SKM}
From an initial guess $x^0\in{\text{range}(A^T)}$, the sequence $\{x^k\}_{k=0}^\infty$ generated by the RB-SKM method converges linearly in expectation to the least-Euclidean-norm solution $x^{\star}=A^{\dag}b$ and
\begin{align}
\mathbb{E}^{k}[\|x^{k+1}-x^\star\|^{2}_2]
\leq
\left(1- \frac{\delta}{\xi_k}\frac{\beta}{m} \frac{\lambda_{\min }^{+} (A^TA)}{\lambda_{\max} (A_{\mathcal{I}}^TA_{\mathcal{I}})}  \right)\| x^{k}-x^\star \|^{2}_2,\label{th:RB-SKM-1}
\end{align}
where
$$
\lambda_{\max}^{-1}(A_{\mathcal{I}}^TA_{\mathcal{I}})= \min\limits _{  \mathcal{I}_{k}} \lambda_{\max}^{-1}(A_{\mathcal{I}_k}^TA_{\mathcal{I}_k})
\quad \text{and}\quad
\xi_k=\frac{\sum  \limits _{\tau_k \in \binom{[m]}{\beta}} \left\|A_{\tau_k} x^{k}- b_{\tau_k}\right\|_{2}^{2}}{\sum  \limits _{\tau_k \in \binom{[m]}{\beta}} \min\limits _{i\in  \mathcal{I}_{k}}\left(A^{(i)} x^{k}- b^{(i)}\right)^{2}}.
$$
\end{theorem}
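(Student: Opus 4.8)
The plan is to follow the standard orthogonal‑projection analysis of Kaczmarz‑type iterations, adding the combinatorial bookkeeping forced by the two‑stage (random, then greedy) index selection, and to identify $\xi_k$ as the quantity that records the effect of the greedy step after averaging over the random sample.

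First I would record the one‑step geometry. Since $x^0\in\text{range}(A^T)$, $x^\star=A^{\dagger}b\in\text{range}(A^T)$, and every increment $d^k=A_{\mathcal{I}_k}^{\dagger}(b_{\mathcal{I}_k}-A_{\mathcal{I}_k}x^k)$ lies in $\text{range}(A_{\mathcal{I}_k}^T)\subseteq\text{range}(A^T)$, an induction gives $x^k-x^\star\in\text{range}(A^T)$ for all $k$. Consistency gives $b_{\mathcal{I}_k}-A_{\mathcal{I}_k}x^k=A_{\mathcal{I}_k}(x^\star-x^k)\in\text{range}(A_{\mathcal{I}_k})$, so the update produces $A_{\mathcal{I}_k}x^{k+1}=b_{\mathcal{I}_k}$, i.e. $x^{k+1}-x^\star\in\ker(A_{\mathcal{I}_k})$, which is orthogonal to $x^{k+1}-x^k=d^k\in\text{range}(A_{\mathcal{I}_k}^T)$; the Pythagorean identity then yields
$$\|x^{k+1}-x^\star\|_2^2=\|x^k-x^\star\|_2^2-\|x^{k+1}-x^k\|_2^2 .$$
Writing $r^k_{\mathcal{I}_k}=b_{\mathcal{I}_k}-A_{\mathcal{I}_k}x^k\in\text{range}(A_{\mathcal{I}_k})$ and using $A_{\mathcal{I}_k}^{\dagger}=A_{\mathcal{I}_k}^T(A_{\mathcal{I}_k}A_{\mathcal{I}_k}^T)^{\dagger}$, one gets $\|x^{k+1}-x^k\|_2^2=(r^k_{\mathcal{I}_k})^T(A_{\mathcal{I}_k}A_{\mathcal{I}_k}^T)^{\dagger}r^k_{\mathcal{I}_k}\geq\|r^k_{\mathcal{I}_k}\|_2^2/\lambda_{\max}(A_{\mathcal{I}_k}^TA_{\mathcal{I}_k})\geq\|r^k_{\mathcal{I}_k}\|_2^2/\lambda_{\max}(A_{\mathcal{I}}^TA_{\mathcal{I}})$, the last step replacing the instance‑dependent constant by its worst case over size‑$\delta$ subsets; here Lemma~\ref{lem-1}, applied to the principal submatrix $A_{\mathcal{I}_k}A_{\mathcal{I}_k}^T$ of $AA^T$, may also be invoked to bound $\lambda_{\max}(A_{\mathcal{I}_k}^TA_{\mathcal{I}_k})\leq\lambda_{\max}(A^TA)$ for a coarser estimate.

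The core step is to take the conditional expectation over the uniformly random $\tau_k\in\binom{[m]}{\beta}$ and convert the greedy selection into $\xi_k$. Because $\mathcal{I}_k$ collects the $\delta$ entries of largest magnitude in $r^k_{\tau_k}$, we have $\|r^k_{\mathcal{I}_k}\|_2^2\geq\delta\min_{i\in\mathcal{I}_k}(A^{(i)}x^k-b^{(i)})^2$, so averaging over the $\binom{m}{\beta}$ equally likely subsets,
$$\mathbb{E}^k[\|r^k_{\mathcal{I}_k}\|_2^2]\geq\frac{\delta}{\binom{m}{\beta}}\sum_{\tau_k\in\binom{[m]}{\beta}}\min_{i\in\mathcal{I}_k}(A^{(i)}x^k-b^{(i)})^2=\frac{\delta}{\xi_k}\cdot\frac{1}{\binom{m}{\beta}}\sum_{\tau_k\in\binom{[m]}{\beta}}\|A_{\tau_k}x^k-b_{\tau_k}\|_2^2 .$$
Since each row index lies in exactly $\binom{m-1}{\beta-1}$ of the $\beta$‑subsets, $\sum_{\tau_k\in\binom{[m]}{\beta}}\|A_{\tau_k}x^k-b_{\tau_k}\|_2^2=\binom{m-1}{\beta-1}\|Ax^k-b\|_2^2$, and $\binom{m-1}{\beta-1}/\binom{m}{\beta}=\beta/m$, giving $\mathbb{E}^k[\|r^k_{\mathcal{I}_k}\|_2^2]\geq\frac{\delta}{\xi_k}\frac{\beta}{m}\|Ax^k-b\|_2^2$. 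Finally $x^k-x^\star\in\text{range}(A^T)$ gives $\|Ax^k-b\|_2^2=\|A(x^k-x^\star)\|_2^2\geq\lambda_{\min}^+(A^TA)\|x^k-x^\star\|_2^2$; feeding these bounds into the Pythagorean identity produces exactly \eqref{th:RB-SKM-1}.

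Linear convergence to $x^\star$ then follows by taking the full expectation in \eqref{th:RB-SKM-1} and iterating: the contraction factor lies in $[0,1)$ (for $x^k\neq x^\star$ it must, since the left side is nonnegative and $\xi_k\geq\delta$ by the greedy cardinality bound), so $\mathbb{E}[\|x^k-x^\star\|_2^2]\to0$, and since every $x^k-x^\star$ stays in the fixed subspace $\text{range}(A^T)$ on which $x^\star=A^{\dagger}b$ is the unique solution of $Ax=b$, the limit is the least‑Euclidean‑norm solution. I expect the main obstacle to be the bookkeeping in the core step: correctly passing from the random‑then‑greedy choice of $\mathcal{I}_k$ to the averaged ratio $\xi_k$, and keeping straight which constant ($\lambda_{\max}(A_{\mathcal{I}_k}^TA_{\mathcal{I}_k})$ versus its worst case, the factor $\delta$ from the greedy cardinality, and the combinatorial factor $\beta/m$) is extracted from the expectation at which stage; one should also note the mild nondegeneracy needed for $\xi_k$ to be finite (at least $\delta$ residual components nonzero), outside of which \eqref{th:RB-SKM-1} is read trivially.
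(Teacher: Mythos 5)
Your proposal is correct and follows essentially the same route as the paper: the one-step descent inequality via orthogonal projection, the greedy bound $\|r^k_{\mathcal{I}_k}\|_2^2\geq\delta\min_{i\in\mathcal{I}_k}(A^{(i)}x^k-b^{(i)})^2$, averaging over the $\binom{m}{\beta}$ equally likely subsets with the $\beta/m$ counting identity, the definition of $\xi_k$, and the Courant--Fischer bound on $\text{range}(A^T)$. The only difference is that you derive the initial projection inequality from scratch (correctly), whereas the paper imports it from Theorem~1 of the BSKM paper.
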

\begin{proof}
Following a similar proof process of Theorem 1 in \cite{zhang2021block}, we can obtain the inequality
\begin{align}
\|x^{k+1}-x^\star\|^{2}_2 \leq  \| x^{k}-x^\star \|^{2}_2-\lambda_{\max}^{-1}(A_{\mathcal{I}_{k}}^TA_{\mathcal{I}_{k}})\sum \limits _{i_k\in \mathcal{I}_{k}}(A^{(i_k)} x^{k}- b^{(i_k)})^2. \notag
\end{align}
Thus, according to the RB-SKM method in Algorithm \ref{Al: The RB-SKM method}, we get
\begin{align}
\|x^{k+1}-x^\star\|^{2}_2
&\leq
\| x^{k}-x^\star \|^{2}_2-\lambda_{\max}^{-1}(A_{\mathcal{I}_{k}}^TA_{\mathcal{I}_{k}})\delta\min\limits _{i\in \mathcal{I}_{k}}(A^{(i)} x^{k}- b^{(i)})^2 .\notag
\end{align}
Now, taking expectation conditioned on $x^{k}$ and considering the definition of $\lambda_{\max}^{-1}(A_{\mathcal{I}}^TA_{\mathcal{I}})$, we have
\begin{align}
\mathbb{E}^{k}\left[\|x^{k+1}-x^\star\|^{2}_2 \right]
&\leq  \| x^{k}-x^\star \|^{2}_2-\mathbb{E}^{k}\left[\lambda_{\max}^{-1}(A_{\mathcal{I}_{k}}^TA_{\mathcal{I}_{k}})\delta\min\limits _{i\in \mathcal{I}_{k}}(A^{(i)} x^{k}- b^{(i)})^2\right]\notag
\\
&\leq  \| x^{k}-x^\star \|^{2}_2-\lambda_{\max}^{-1}(A_{\mathcal{I}}^TA_{\mathcal{I}}) \delta\mathbb{E}^{k}\left[ \min\limits _{i\in\mathcal{I}_{k}}(A^{(i)} x^{k}- b^{(i)})^2\right]\notag
\\
&=  \| x^{k}-x^\star \|^{2}_2-\lambda_{\max}^{-1}(A_{\mathcal{I}}^TA_{\mathcal{I}}) \delta\sum  \limits _{\tau_k \in \binom{[m]}{\beta}} \frac{1}{\binom{m}{\beta}} \min\limits _{i\in \mathcal{I}_{k}}(A^{(i)} x^{k}- b^{(i)})^2 ,\notag
\end{align}
which together with the definition of $\xi_k$ leads to
\begin{align}
\mathbb{E}^{k}\left[\|x^{k+1}-x^\star\|^{2}_2 \right]
&\leq
\| x^{k}-x^\star \|^{2}_2-\lambda_{\max}^{-1}(A_{\mathcal{I}}^TA_{\mathcal{I}}) \delta \frac{1}{\binom{m}{\beta}}  \frac{1}{\xi_k}\sum  \limits _{\tau_k \in \binom{[m]}{\beta}} \left \|A_{\tau_k} x^{k}- b_{\tau_k}\right\|_{2}^{2}  \notag
\\
&=
\| x^{k}-x^\star \|^{2}_2-\lambda_{\max}^{-1}(A_{\mathcal{I}}^TA_{\mathcal{I}}) \delta \frac{1}{\binom{m}{\beta}}  \frac{1}{\xi_k}\frac{\binom{m}{\beta}\beta}{m}\|A x^{k}- b\|_{2}^{2}.  \notag
\end{align}
Further, 
together with the Courant-Fisher theorem:
\begin{align}
\|Ax\|^2_2\geq\lambda_{\min }^{+}\left( A^{T} A\right)\|x\|^2_2  \label{2}\ \textrm{is valid for any vector}\ x \in {\text{range}(A^T)},\notag
\end{align}
it yields
\begin{align}
\mathbb{E}^{k}\left[\|x^{k+1}-x^\star\|^{2}_2 \right]
&\leq
\left(1- \frac{\delta}{\xi_k}\frac{\beta}{m} \frac{\lambda_{\min }^{+} (A^TA)}{\lambda_{\max} (A_{\mathcal{I}}^TA_{\mathcal{I}})}  \right)\| x^{k}-x^\star \|^{2}_2. \notag
\end{align}
So, the desired result (\ref{th:RB-SKM-1}) is obtained.
\end{proof}
\begin{remark}\label{rek:RB-SKM-1}
Assuming $  \min\limits _{i\in  \mathcal{I}_{k}} \left(A^{(i)} x^{k}- b^{(i)}\right)^{2}\neq 0$, we have $\xi_k\geq\delta$. The lower bound of $\xi_k $ is achieved by setting $ \left(A^{(i)} x^{k}- b^{(i)}\right)^{2}= \left(A^{(j)} x^{k}- b^{(j)}\right)^{2}$ for any $i, j\in \mathcal{I}_{k}$ and $ \left(A^{(i)} x^{k}- b^{(i)}\right)^{2}=0$ for any $i\in[m]\backslash \mathcal{I}_{k}  $. Now, combining with the facts $\beta\leq m$ and $\lambda_{\min }^{+} (A^TA)\leq\lambda_{\max} (A ^TA )$, we get
$$
1- \frac{\delta}{\xi_k}\frac{\beta}{m} \frac{\lambda_{\min }^{+} (A^TA)}{\lambda_{\max} (A ^TA )} <1.
$$
Further, according to Lemma \ref{lem-1}, it is easy to obtain $\lambda_{\max} (A_{\mathcal{I}}^TA_{\mathcal{I}})\leq\lambda_{\max} (A ^TA )$. Then we can get
$$
\rho_{\text{RB-SKM}}=1- \frac{\delta}{\xi_k}\frac{\beta}{m} \frac{\lambda_{\min }^{+} (A^TA)}{\lambda_{\max} (A_{\mathcal{I}}^TA_{\mathcal{I}})} \leq1- \frac{\delta}{\xi_k}\frac{\beta}{m} \frac{\lambda_{\min }^{+} (A^TA)}{\lambda_{\max} (A ^TA )} <1,
$$
which means that the convergence factor of the RB-SKM method is smaller than 1.
\end{remark}

The following remarks specify the parameters in the RB-SKM method to obtain the convergence results for the specific methods shown in Table \ref{Al_rm1:tab1} and then we compare them with the existing results.

\begin{remark}[connection to BSKM1]
\label{rek:recovery BSKM1}
When $\beta=m$ and $\delta\in[m]$,  from Theorem \ref{th:RB-SKM}, we can get a convergence result of the BSKM1 method: 
\begin{equation}
\mathbb{E}^{k}[\|x^{k+1}-x^\star\|^{2}_2]
\leq
\left(1- \frac{\delta}{\xi_k} \frac{\lambda_{\min }^{+} (A^TA)}{\lambda_{\max} (A_{\mathcal{I}}^TA_{\mathcal{I}})}  \right)\| x^{k}-x^\star \|^{2}_2,\label{1000}
\end{equation}
where
$
\xi_k=\frac{\sum  \limits _{\tau_k \in \binom{[m]}{\beta}} \left\|A_{\tau_k} x^{k}- b_{\tau_k}\right\|_{2}^{2}}{\sum  \limits _{\tau_k \in \binom{[m]}{\beta}} \min\limits _{i\in  \mathcal{I}_{k}}\left(A^{(i)} x^{k}- b^{(i)}\right)^{2}} 
$. Note that the existing convergence result given in \cite[Theorem 1]{zhang2021block} for the BSKM1 method is
\begin{equation}
\mathbb{E}^{k}[\|x^{k+1}-x^\star\|^{2}_2]
\leq
\left(1- \frac{\bar{\beta}}{\gamma_k}\frac{\delta}{m} \frac{\lambda_{\min }^{+} (A^TA)}{\lambda_{\max} (A_{\mathcal{I}}^TA_{\mathcal{I}})}  \right)\| x^{k}-x^\star \|^{2}_2, \label{10001}
\end{equation}
where $\gamma_k=\frac{\sum  \limits _{\tau_k \in \binom{m}{\bar{\beta}}} \left \|A_{\tau_k} x_{k}- b_{\tau_k}\right\|_{2}^{2}}{\sum  \limits _{\tau_k \in \binom{m}{\bar{\beta}}}  \left\|A_{\tau_k} x_{k}- b_{\tau_k}\right\|_{\infty}^{2}}$ and $\bar{\beta}\leq m$.
Since $\frac{1}{\xi_k}\geq\frac{\bar{\beta}}{\gamma_k m} $ implies $1- \frac{\delta}{\xi_k} \frac{\lambda_{\min }^{+} (A^TA)}{\lambda_{\max} (A_{\mathcal{I}}^TA_{\mathcal{I}})} \leq 1- \frac{\bar{\beta}}{\gamma_k}\frac{\delta}{m} \frac{\lambda_{\min }^{+} (A^TA)}{\lambda_{\max} (A_{\mathcal{I}}^TA_{\mathcal{I}})} $, 
the new convergence result \eqref{1000} improves the existing one \eqref{10001} under the aforementioned condition. 
\end{remark}

\begin{remark}[connection to Motzkin]
\label{rek:recovery Motzkin}
If the coefficient matrix $A$ is standardized, i.e., $\| A^{(i)} \|_2^2=1$ for $i\in [m]$, then, setting $\beta=m$ and $\delta=1$, our convergence result in Theorem \ref{th:RB-SKM} reduces to
\begin{align}
\|x^{k+1}-x^\star\|^{2}_2
\leq
\left(1- \frac{\lambda_{\min }^{+} (A^TA)}{\xi_k}    \right)\| x^{k}-x^\star \|^{2}_2,\notag
\end{align}
where
$$
\xi_k=\frac{\sum  \limits _{\tau_k \in \binom{[m]}{\beta}} \left\|A_{\tau_k} x^{k}- b_{\tau_k}\right\|_{2}^{2}}{\sum  \limits _{\tau_k \in \binom{[m]}{\beta}} \min\limits _{i\in  \mathcal{I}_{k}}\left(A^{(i)} x^{k}- b^{(i)}\right)^{2}}
=\frac{\sum  \limits _{\tau_k \in \binom{[m]}{\beta}} \left\|A_{\tau_k} x^{k}- b_{\tau_k}\right\|_{2}^{2}}{\sum  \limits _{\tau_k \in \binom{[m]}{\beta}}  \left\|A_{\tau_k} x^{k}- b_{\tau_k}\right\|_{\infty}^{2}}.
$$
It is just the convergence result 
given in \cite[Remark 3 ]{haddock2021greed} for the Motzkin method.
\end{remark}

\begin{remark}[connection to RBK]
\label{rek:recovery RBK}
Setting $\beta\in[m]$ and $\delta=\beta$, if the coefficient matrix $A$ is standardized and its row partition $T=\{\tau_1, \tau_2, \cdots, \tau_p\}$ satisfies $\sum_{i=1}^{p}\lvert \tau_i\rvert=m$, $\lvert \tau_i\rvert=\beta$ and $\lambda_{\max} (A_{\tau_i}^TA_{\tau_i})\leq \beta$ for any $i\in[p]$, then, 
from Theorem \ref{th:RB-SKM}, we have
\begin{align}
\mathbb{E}^{k}[\|x^{k+1}-x^\star\|^{2}_2]
\leq
\left(1- \frac{\beta}{\xi_k}\frac{\beta}{m} \frac{\lambda_{\min }^{+} (A^TA)}{\lambda_{\max} (A_{\mathcal{I}}^TA_{\mathcal{I}})}  \right)\| x^{k}-x^\star \|^{2}_2,\notag
\end{align}
where
$$
\lambda_{\max}(A_{\mathcal{I}}^TA_{\mathcal{I}})\leq \beta
\quad \text{and}\quad
\xi_k\geq\beta.
$$
Further, we get
\begin{align}
\mathbb{E}[\|x^{k}-x^\star\|^{2}_2]
\leq
\prod_{j=0}^{j=k-1}\left(1- \frac{\beta}{\xi_j} \frac{\lambda_{\min }^{+} (A^TA)}{m}  \right)\| x^{0}-x^\star \|^{2}_2,\notag
\end{align}
which is an improved convergence result for the RBK method over the one presented in \cite[Theorem 1.2]{needell2014paved},
\begin{align}
\mathbb{E}[\|x^{k}-x^\star\|^{2}_2]
\leq
\left(1-  \frac{\lambda_{\min }^{+} (A^TA)}{\beta m}  \right)^k\| x^{0}-x^\star \|^{2}_2,\notag
\end{align}
when $\beta^2\geq \xi_j$.
\end{remark}

\begin{remark}[connection to SKM]
\label{rek:recovery SKM}
Assuming that the coefficient matrix $A$ is standardized and the parameters $\beta\in[m]$ and $\delta=1$, according to Theorem \ref{th:RB-SKM}, we have 
\begin{align}
\mathbb{E}^{k}[\|x^{k+1}-x^\star\|^{2}_2]
\leq
\left(1- \frac{\beta\lambda_{\min }^{+} (A^TA) }{\xi_k m}  \right)\| x^{k}-x^\star \|^{2}_2,\notag
\end{align}
where
$$
\xi_k=\frac{\sum  \limits _{\tau_k \in \binom{[m]}{\beta}} \left\|A_{\tau_k} x^{k}- b_{\tau_k}\right\|_{2}^{2}}{\sum  \limits _{\tau_k \in \binom{[m]}{\beta}} \min\limits _{i\in  \mathcal{I}_{k}}\left(A^{(i)} x^{k}- b^{(i)}\right)^{2}}
=\frac{\sum  \limits _{\tau_k \in \binom{[m]}{\beta}} \left\|A_{\tau_k} x^{k}- b_{\tau_k}\right\|_{2}^{2}}{\sum  \limits _{\tau_k \in \binom{[m]}{\beta}}  \left\|A_{\tau_k} x^{k}- b_{\tau_k}\right\|_{\infty}^{2}}.
$$
Further, we get
\begin{align}
\mathbb{E}[\|x^{k}-x^\star\|^{2}_2]
\leq
\prod_{j=0}^{j=k-1}\left(1- \frac{\beta\lambda_{\min }^{+} (A^TA) }{\xi_j m}  \right)\| x^{0}-x^\star \|^{2}_2,\notag
\end{align}
which is just the convergence result for the SKM method shown in \cite[Corollary 2.1]{haddock2021greed}.
\end{remark}

\begin{remark}[connection to RK]
\label{rek:recovery RK}
Suppose that the coefficient matrix $A$ is standardized and $\beta=\delta=1$. Then, from Theorem \ref{th:RB-SKM}, we have 
\begin{align}
\mathbb{E}^{k}[\|x^{k+1}-x^\star\|^{2}_2]
\leq
\left(1- \frac{1}{\xi_k}\frac{\lambda_{\min }^{+} (A^TA)}{m}   \right)\| x^{k}-x^\star \|^{2}_2,\notag
\end{align}
where
$$
\xi_k=\frac{\sum  \limits _{\tau_k \in \binom{[m]}{\beta}} \left\|A_{\tau_k} x^{k}- b_{\tau_k}\right\|_{2}^{2}}{\sum  \limits _{\tau_k \in \binom{[m]}{\beta}} \min\limits _{i\in  \mathcal{I}_{k}}\left(A^{(i)} x^{k}- b^{(i)}\right)^{2}}=1.
$$
So, we can obtain
\begin{align}
\mathbb{E}[\|x^{k}-x^\star\|^{2}_2]
\leq
\left(1-  \frac{\lambda_{\min }^{+} (A^TA)}{m}   \right)^k\| x^{0}-x^\star \|^{2}_2,\notag
\end{align}
which is just the convergence result of the RK method given in \cite[Theorem 2]{Strohmer2009}.
\end{remark}

Since $\xi_k$ plays an important role in the convergence behavior of the RB-SKM method, and we only know its lower bound from Remark \ref{rek:RB-SKM-1}, we next analyze its upper bound in expectation for the Gaussian matrix case. The proof is similar to Lemma 2.2 of \cite{haddock2019motzkin} and Proposition 4.1 of \cite{haddock2021greed}.

\begin{lemma}
\label{lem-Gaussian-case} 
Assume that $A \in \mathbb{R}^{m\times n}$ is a random Gaussian matrix with $A_{(i, j)}\thicksim\mathcal{N}(0,\sigma^2)$, $x^\star=0$ and $b=0$. For each $\tau \in \binom{[m]}{\beta}$, $ \mathcal{I}_{k}\subseteq\tau$ and $\lvert\mathcal{I}_{k}\rvert=\delta$, let $\mathcal{I}_{\tau}\subseteq\tau  \backslash\mathcal{I}_{k}$ be a set of rows that are independent of $x$ and $\lvert\mathcal{I}_{\tau}\rvert\leq\beta-\delta$. If $x$ is independent of at least $m^{\prime}$ rows of $A$, then  
\begin{align*}
 \xi_k
&=
\frac{\sum  \limits _{\tau \in \binom{[m]}{\beta}} \mathbb{E}[\left\|A_{\tau} x\right\|_{2}^{2}]}{\sum  \limits _{\tau  \in \binom{[m]}{\beta}} \mathbb{E}[ \min\limits _{i\in  \mathcal{I}_{k}}\left(A^{(i)} x \right)^{2}]}
\lesssim
\frac{\binom{m}{\beta} \left((\beta-\delta)n+\sum_{i \in \tau \backslash I_\tau}\|A^{(i)}\|_2^2/\sigma^2 \right)}{\binom{m^{\prime}}{\beta}  \log(\beta-\delta)}.
\end{align*}
\end{lemma}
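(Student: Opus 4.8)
The plan is to bound the numerator and the denominator of $\xi_k$ separately, working conditionally on the iterate $x=x^k$ (equivalently, on the rows of $A$ used in the first $k$ projections). Since $x^\star=0$ and $b=0$ the block residual is just $A_\tau x$ with $x:=x^k-x^\star=x^k$; by hypothesis $x$ is a function of at most $m-m'$ rows of $A$, so there is a set $S\subseteq[m]$ with $|S|\ge m'$ whose rows are jointly independent of $x$, and conditioned on $x$ those rows are still i.i.d.\ $\mathcal N(0,\sigma^2 I_n)$.

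For the numerator, fix $\tau$ and split $\|A_\tau x\|_2^2=\sum_{i\in\tau}(A^{(i)}x)^2$ over $\mathcal I_\tau$ and $\tau\setminus\mathcal I_\tau$. For $i\in\mathcal I_\tau$ the row is independent of $x$, so Cauchy--Schwarz together with independence gives $\mathbb E[(A^{(i)}x)^2]\le\mathbb E[\|A^{(i)}\|_2^2]\,\mathbb E[\|x\|_2^2]=n\sigma^2\,\mathbb E[\|x\|_2^2]$, and since $|\mathcal I_\tau|\le\beta-\delta$ these rows contribute at most $(\beta-\delta)n\sigma^2\,\mathbb E[\|x\|_2^2]$. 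For $i\in\tau\setminus\mathcal I_\tau$ I use only Cauchy--Schwarz, $(A^{(i)}x)^2\le\|A^{(i)}\|_2^2\|x\|_2^2$, keeping the realized row norms. Adding these two contributions and summing over the $\binom{m}{\beta}$ choices of $\tau$ yields a numerator at most $\binom{m}{\beta}\sigma^2\,\mathbb E[\|x\|_2^2]\bigl((\beta-\delta)n+\sum_{i\in\tau\setminus I_\tau}\|A^{(i)}\|_2^2/\sigma^2\bigr)$.

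For the denominator, every summand $\mathbb E[\min_{i\in\mathcal I_k}(A^{(i)}x)^2]$ is nonnegative, so I restrict the sum to the $\binom{|S|}{\beta}\ge\binom{m'}{\beta}$ blocks $\tau\subseteq S$. For such a $\tau$ all $\beta$ entries $(A^{(i)}x)^2$, $i\in\tau$, are, conditionally on $x$, i.i.d.\ copies of $\sigma^2\|x\|_2^2\,\chi^2_1$. Because $\mathcal I_k$ is exactly the set of the $\delta$ largest of these entries, its minimum dominates the maximum over the remaining $\beta-\delta$ entries; taking $\mathcal I_\tau=\tau\setminus\mathcal I_k$ (which has size $\beta-\delta$) this reads $\min_{i\in\mathcal I_k}(A^{(i)}x)^2\ge\max_{j\in\mathcal I_\tau}(A^{(j)}x)^2$. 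Passing to expectations and applying the Gaussian extreme-value estimate used in \cite{haddock2019motzkin,haddock2021greed} --- namely $\mathbb E[\max_{1\le j\le t}g_j^2]\gtrsim\log t$ for i.i.d.\ standard normals $g_j$, here with $t=\beta-\delta$ --- gives $\mathbb E[\min_{i\in\mathcal I_k}(A^{(i)}x)^2]\gtrsim\sigma^2\,\mathbb E[\|x\|_2^2]\log(\beta-\delta)$, hence a denominator bounded below, up to an absolute constant, by $\binom{m'}{\beta}\sigma^2\,\mathbb E[\|x\|_2^2]\log(\beta-\delta)$. Dividing the numerator bound by the denominator bound cancels $\sigma^2\,\mathbb E[\|x\|_2^2]$ and leaves exactly the claimed inequality.

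I expect the delicate part to be the denominator rather than the numerator. Three points need care: (i) correctly identifying the rows of $A$ that are independent of $x^k$ --- only those corresponding to previously projected blocks are correlated with it --- and pushing the conditioning on $x^k$ through cleanly; (ii) justifying that, conditionally on $x$, the residual coordinates over a block $\tau\subseteq S$ really are i.i.d.\ $\chi^2_1$ up to scaling; and (iii) supplying the one-sided Gaussian bound $\mathbb E[\max_{j\le t}g_j^2]\gtrsim\log t$ with an explicit constant, which is the tail/order-statistic computation imported from \cite[Lemma 2.2]{haddock2019motzkin} and \cite[Proposition 4.1]{haddock2021greed}. The observation that lets that scalar estimate be used in the block regime ($\delta>1$) is precisely the domination of $\min_{i\in\mathcal I_k}(A^{(i)}x)^2$ by the maximum over the $\beta-\delta$ rows outside $\mathcal I_k$, which is also why the logarithm in the final bound is $\log(\beta-\delta)$ and not $\log\beta$.
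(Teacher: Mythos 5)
Your proposal is correct and follows essentially the same route as the paper: the same Cauchy--Schwarz split of the numerator over $\mathcal I_\tau$ versus $\tau\setminus\mathcal I_\tau$ with $\mathbb E[\|A^{(i)}\|_2^2]=n\sigma^2$, and the same denominator chain (restrict to the $\binom{m'}{\beta}$ blocks of rows independent of $x$, dominate $\min_{i\in\mathcal I_k}$ by $\max_{i\in\mathcal I_\tau}$, then apply the Gaussian extreme-value estimate to get the $\log(\beta-\delta)$ factor). The only cosmetic difference is that the paper reaches the lower bound via Jensen's inequality followed by $\mathbb E[\max_i A^{(i)}x]\gtrsim\sigma\|x\|_2\sqrt{\log(\beta-\delta)}$, whereas you invoke $\mathbb E[\max_j g_j^2]\gtrsim\log t$ directly; the substance is identical.
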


\begin{proof}

Firstly, for 
the numerator, by using the Cauchy-Schwarz inequality, we have
\begin{align}
\sum  \limits _{\tau \in \binom{[m]}{\beta}} \mathbb{E}[\left\|A_{\tau} x\right\|_{2}^{2}]
&\leq \sum  \limits _{\tau \in \binom{[m]}{\beta}} \sum  \limits _{i \in \tau}\mathbb{E}[\|A^{(i)}\|_{2}^{2}\left\|  x\right\|_{2}^{2}] \notag
\\
&=\sum_{\tau \in \binom{[m]}{\beta}}\left(\sum_{i \in I_\tau} \mathbb{E}[\|A^{(i)}\|_2^2\|x\|_2^2]+\sum_{i \in \tau \backslash I_\tau}\|A^{(i)}\|_2^2\|x\|_2^2\right),\notag
\end{align}
which together with the fact $\mathbb{E}[\|A^{(i)}\|_2^2]=n\sigma^2$ leads to 
\begin{align}
\sum  \limits _{\tau \in \binom{[m]}{\beta}} \mathbb{E}[\left\|A_{\tau} x\right\|_{2}^{2}]
&\leq 
\sum_{\tau \in \binom{[m]}{\beta}}\left(\sum_{i \in I_\tau}n\sigma^2\|x\|_2^2+\sum_{i \in \tau \backslash I_\tau}\|A^{(i)}\|_2^2\|x\|_2^2\right)\notag
\\
&\leq 
\sum_{\tau \in \binom{[m]}{\beta}}\left((\beta-\delta)n\sigma^2 +\sum_{i \in \tau \backslash I_\tau}\|A^{(i)}\|_2^2\right)\|x\|_2^2\notag
\\
&\approx
\binom{m}{\beta} \left((\beta-\delta)n\sigma^2 +\sum_{i \in \tau \backslash I_\tau}\|A^{(i)}\|_2^2\right)\|x\|_2^2.\notag
\end{align}

Secondly, for the 
denominator, we have
\begin{align}
\sum  \limits _{\tau  \in \binom{[m]}{\beta}} \mathbb{E}[ \min\limits _{i\in  \mathcal{I}_{k}}\left(A^{(i)} x \right)^{2}]
&\geq
\sum  \limits _{\tau  \in \binom{[m]}{\beta}} \mathbb{E}[ \max\limits _{i\in\tau  \backslash\mathcal{I}_{k}}\left(A^{(i)} x \right)^{2}]
\geq
\sum  \limits _{\tau  \in \binom{[m]}{\beta}} \mathbb{E}[ \max\limits _{i\in \mathcal{I}_{\tau}}\left(A^{(i)} x \right)^{2}]\notag\\
&\geq
\sum  \limits _{\tau  \in \binom{[m]}{\beta}} \mathbb{E}[\left( \max\limits _{i\in\mathcal{I}_{\tau}}A^{(i)} x \right)^{2}].\notag
\end{align}
Further, by applying the Jensen's inequality, we get
\begin{align}
\sum  \limits _{\tau  \in \binom{[m]}{\beta}} \mathbb{E}[ \min\limits _{i\in  \mathcal{I}_{k}}\left(A^{(i)} x \right)^{2}]
&\geq
\sum  \limits _{\tau  \in \binom{[m]}{\beta}} \left( \mathbb{E}[\max\limits _{i\in\mathcal{I}_{\tau}}A^{(i)} x ]\right)^{2} \notag
\\
&\geq
\sum  \limits _{\tau  \in \binom{[m]}{\beta},~\lvert\mathcal{I}_{\tau}\rvert=\beta-\delta } \left( \mathbb{E}[\max\limits _{i\in\mathcal{I}_{\tau}}A^{(i)} x ]\right)^{2}.\notag
\end{align}
Moreover, from the fact $A^{(i)} x\thicksim\mathcal{N}(0,\sigma^2\|x\|_2^2)$ and the estimation for the maximum of independent normal random variables, we have 
\begin{align}
 \mathbb{E}[\max\limits _{i\in\mathcal{I}_{\tau}}A^{(i)} x ] 
 \gtrsim
 \sigma \|x\|_2 \sqrt{\log(\lvert\mathcal{I}_{\tau}\rvert)}. \notag
\end{align}
Thus,
\begin{align}
\sum  \limits _{\tau  \in \binom{[m]}{\beta}} \mathbb{E}[ \min\limits _{i\in  \mathcal{I}_{k}}\left(A^{(i)} x \right)^{2}]
& \gtrsim
\sum  \limits _{\tau  \in \binom{[m]}{\beta},~\lvert\mathcal{I}_{\tau}\rvert=\beta-\delta }\sigma^2 \|x\|_2^2\log(\beta-\delta) \notag
\\
&\geq
\binom{m^{\prime}}{\beta} \sigma^2 \|x\|_2^2\log(\beta-\delta).\notag
\end{align}

Therefore,  
\begin{align*}
 \xi_k
&=
\frac{\sum  \limits _{\tau \in \binom{[m]}{\beta}} \mathbb{E}[\left\|A_{\tau} x\right\|_{2}^{2}]}{\sum  \limits _{\tau  \in \binom{[m]}{\beta}} \mathbb{E}[ \min\limits _{i\in  \mathcal{I}_{k}}\left(A^{(i)} x \right)^{2}]}
 \lesssim
\frac{\binom{m}{\beta} \left((\beta-\delta)n\sigma^2 +\sum_{i \in \tau \backslash I_\tau}\|A^{(i)}\|_2^2\right)\|x\|_2^2}{\binom{m^{\prime}}{\beta} \sigma^2 \|x\|_2^2\log(\beta-\delta)}\\
 &=
\frac{\binom{m}{\beta} \left((\beta-\delta)n+\sum_{i \in \tau \backslash I_\tau}\|A^{(i)}\|_2^2/\sigma^2 \right)}{\binom{m^{\prime}}{\beta}  \log(\beta-\delta)},
\end{align*}
which is the desired result.
\end{proof}
\begin{remark} 
\label{rek:bound of xi}
From Lemma \ref{lem-Gaussian-case}, we know that $\xi_k$ is upper bounded by $\mathcal{O}\left(\frac{(\beta-\delta)n}{ \log(\beta-\delta)} \right)$ when $A$ is a random Gaussian matrix. Here we guess that the sharp upper bound is $\mathcal{O}\left(\frac{\beta-\delta}{ \log(\beta-\delta)} \right)$, and the factor $n$ there is due to 
the proof technique. Figure \ref{fig_for_xi} confirms the conjecture.
\begin{figure}[ht]
 \begin{center}
\includegraphics [height=4cm,width=5.5cm ]{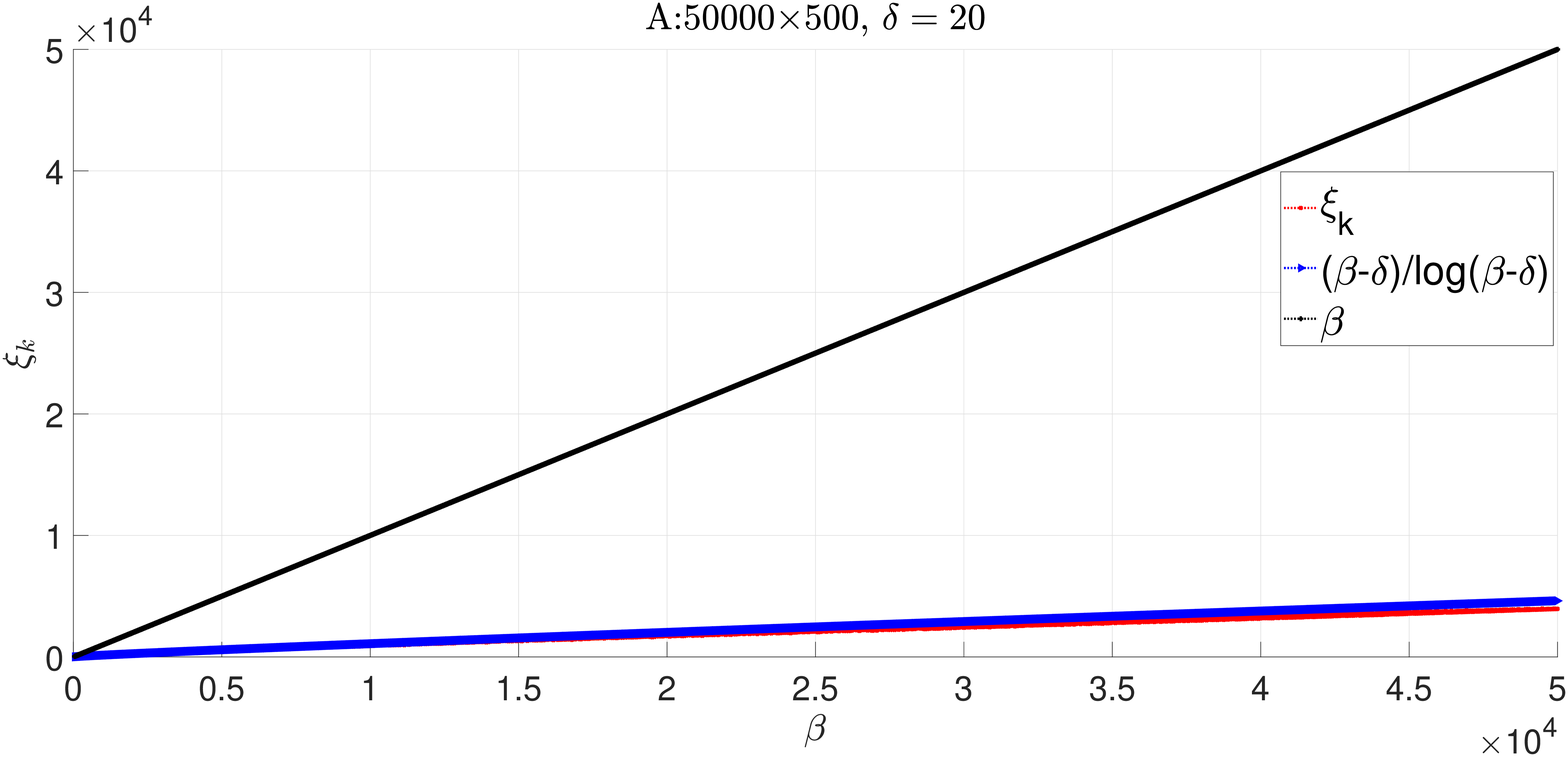}
\includegraphics [height=4cm,width=5.5cm ]{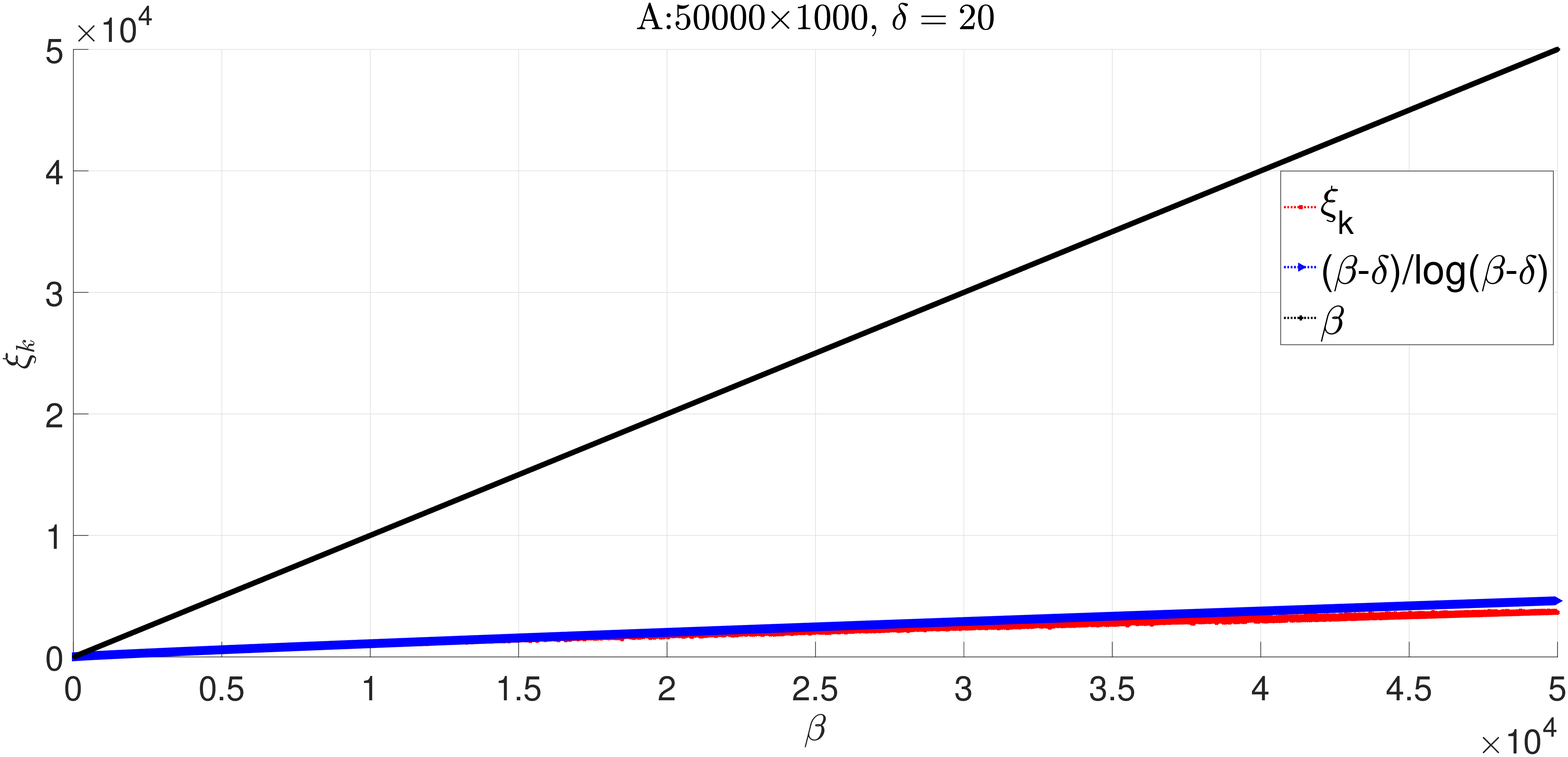}
 \end{center}
\caption{$\xi_k$ for various choices of $\beta$. Left: Gaussian matrix $A\in \mathbb{R}^{50000\times 500}$.  Right: Gaussian matrix $A\in \mathbb{R}^{50000\times 1000}$. }\label{fig_for_xi}
\end{figure}
\end{remark}

Based on the above results and our main theorem, we can get the following Corollary \ref{corollary:for th1} on the convergence for the RB-SKM
method with the coefficient matrix $A$ being the random Gaussian matrix.

\begin{corollary}
\label{corollary:for th1}
Assume that $A \in \mathbb{R}^{m\times n}$ is a random Gaussian matrix with $A_{(i, j)}\thicksim\mathcal{N}(0,\sigma^2)$, and $x^{\star}=A^{\dag}b$ is the least-Euclidean-norm solution. If the sequence $\{x^k\}_{k=0}^\infty$ generated by the RB-SKM method starts from an initial guess $x^0\in{\text{range}(A^T)}$, then we have
\begin{align}
\mathbb{E} [\|x^{k+1}-x^\star\|^{2}_2]
\lesssim
\mathbb{E} [\left(1- \frac{\log(\beta-\delta)\delta}{\beta-\delta}\frac{\beta}{m} \frac{\lambda_{\min }^{+} (A^TA)}{\lambda_{\max} (A^TA)}  \right)\| x^{k}-x^\star \|^{2}_2].\notag
\end{align}
\end{corollary}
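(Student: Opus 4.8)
The plan is to combine the one-step contraction of Theorem \ref{th:RB-SKM}, the eigenvalue interlacing of Lemma \ref{lem-1}, and the Gaussian estimate for $\xi_k$ of Lemma \ref{lem-Gaussian-case} (sharpened as in Remark \ref{rek:bound of xi}), inserting a full expectation via the tower property in between. Since the corollary asserts only a one-step bound, nothing beyond Theorem \ref{th:RB-SKM} is needed on the convergence side (the hypothesis $x^0\in\text{range}(A^T)$ is inherited verbatim). I would first start from \eqref{th:RB-SKM-1} and weaken the block spectral constant: by Lemma \ref{lem-1}, $\lambda_{\max}(A_{\mathcal I_k}^TA_{\mathcal I_k})\le\lambda_{\max}(A^TA)$ for every admissible subset $\mathcal I_k$, hence $\lambda_{\max}(A_{\mathcal I}^TA_{\mathcal I})\le\lambda_{\max}(A^TA)$ — this is exactly the inequality recorded in Remark \ref{rek:RB-SKM-1} — so that
\[
\mathbb{E}^{k}[\|x^{k+1}-x^\star\|_2^2]\le\Bigl(1-\frac{\delta}{\xi_k}\frac{\beta}{m}\frac{\lambda_{\min}^{+}(A^TA)}{\lambda_{\max}(A^TA)}\Bigr)\|x^{k}-x^\star\|_2^2 ,
\]
where $\xi_k$ is the quantity of Theorem \ref{th:RB-SKM}, which is deterministic once $x^k$ is fixed.

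Next I would take the full expectation, using $\mathbb{E}[\cdot]=\mathbb{E}\bigl[\mathbb{E}^{k}[\cdot]\bigr]$, to get
\[
\mathbb{E}[\|x^{k+1}-x^\star\|_2^2]\le\mathbb{E}\Bigl[\Bigl(1-\frac{\delta}{\xi_k}\frac{\beta}{m}\frac{\lambda_{\min}^{+}(A^TA)}{\lambda_{\max}(A^TA)}\Bigr)\|x^{k}-x^\star\|_2^2\Bigr].
\]
Then I would insert the Gaussian control of $\xi_k$: by Lemma \ref{lem-Gaussian-case} together with the sharpening discussed in Remark \ref{rek:bound of xi}, in the Gaussian regime $\xi_k=\mathcal{O}\bigl((\beta-\delta)/\log(\beta-\delta)\bigr)$, so $\delta/\xi_k\gtrsim\delta\log(\beta-\delta)/(\beta-\delta)$. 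Because the $\xi_k$-term enters with a minus sign and multiplies the nonnegative factors $\beta/m$, $\lambda_{\min}^{+}(A^TA)/\lambda_{\max}(A^TA)$ and $\|x^{k}-x^\star\|_2^2$, this lower bound on $\delta/\xi_k$ translates into
\[
1-\frac{\delta}{\xi_k}\frac{\beta}{m}\frac{\lambda_{\min}^{+}(A^TA)}{\lambda_{\max}(A^TA)}\lesssim 1-\frac{\log(\beta-\delta)\delta}{\beta-\delta}\frac{\beta}{m}\frac{\lambda_{\min}^{+}(A^TA)}{\lambda_{\max}(A^TA)},
\]
and substituting this into the expectation bound above yields the claimed estimate.

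The main obstacle, and the place where care is required, is the last step: the bound on $\xi_k$ is not a deterministic pointwise inequality but the heuristic ``expectation of the numerator over expectation of the denominator'' estimate of Lemma \ref{lem-Gaussian-case} (the same device used in \cite{haddock2019motzkin,haddock2021greed}), proved under the normalization $x^\star=0$, $b=0$, and the $n$-free form $(\beta-\delta)/\log(\beta-\delta)$ — as opposed to the rigorously established $n(\beta-\delta)/\log(\beta-\delta)$ — rests on the conjecture of Remark \ref{rek:bound of xi}. So the substitution must be read in this averaged/asymptotic sense, which is precisely why the conclusion carries $\lesssim$ and retains the outer expectation on the right-hand side; one should also note, as is standard in this line of work, that passing from $b=0$ to general $b$ does not change the Gaussian scaling. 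If one wishes to rely only on the fully proved Lemma \ref{lem-Gaussian-case}, the identical argument goes through with an extra factor $n$ inside the contraction.
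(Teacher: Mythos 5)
Your argument is correct and matches the paper's intended derivation: the corollary is presented as an immediate consequence of Theorem \ref{th:RB-SKM}, the spectral weakening $\lambda_{\max}(A_{\mathcal I}^TA_{\mathcal I})\leq\lambda_{\max}(A^TA)$ from Lemma \ref{lem-1} (as recorded in Remark \ref{rek:RB-SKM-1}), and the Gaussian bound on $\xi_k$, which is exactly the chain you assemble. You also rightly flag that the $n$-free factor $\log(\beta-\delta)/(\beta-\delta)$ rests on the conjectured sharp bound of Remark \ref{rek:bound of xi} (and on the expectation-ratio reading of Lemma \ref{lem-Gaussian-case}) rather than on the rigorously proved estimate --- a caveat the paper itself leaves implicit.
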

\section{Experimental results}
\label{sec:experiments}
In this section, we first test the impact of the parameters $\beta$ and $\delta$ on the RB-SKM method, and then compare our method with the BSKM2 method from \cite{zhang2021block} in terms of the iteration numbers (denoted as ``Iteration''), operation counts and computing time in seconds (denoted as ``CPU time(s)''). All the numerical results are the arithmetic average of 10 repeated trials for each method, and all experiments start from an initial vector $x^0=0$ and terminate when the \emph{relative residual} (RR) satisfies $\mathrm{RR}=\frac{\left\|b-A x^{k}\right\|_2^2}{\left\|b-A x^{0}\right\|_2^2}<10^{-6}$, or when the number of iterations exceeds 200000.

\subsection{The impact of the parameters $\beta$ and $\delta$ on the RB-SKM method}
\label{sec:The impact of parameters}

We report in left two subgraphs of Figure \ref{fig_for different parameters} how sensitive the RB-SKM method is for variation of the parameter $\beta$ (from $\delta$ to $m$) and show in right two subgraphs of Figure \ref{fig_for different parameters} how the parameter $\delta$ (from $5$ to $\beta$) affects the RB-SKM method. In the specific experiments, the coefficient matrix $A$ is generated by the MATLAB function \texttt{sprandn(m,n,0.2,0.8)}, the solution vector $x^\star\in \mathbb{R}^{n}$ is generated by the MATLAB function \texttt{randn}, and the vector $b\in \mathbb{R}^{m}$ is generated by setting $b=Ax^\star$.

From Figure \ref{fig_for different parameters}, we find that all results generally show a similar trend, that is, the CPU times of the RB-SKM method with respect to the increase of the parameter $\beta$ or the parameter $\delta$ shows a U-shape. This phenomenon implies that a minimum for CPU time occurs for $\beta$ between $\delta$ and $m$ when the parameter $\delta$ is fixed, and for $\delta$ between $5$ and $\beta$ when the parameter $\beta$ is fixed.
\begin{figure}[ht]
 \begin{center}
\includegraphics [height=3.2cm,width=3.7cm ]{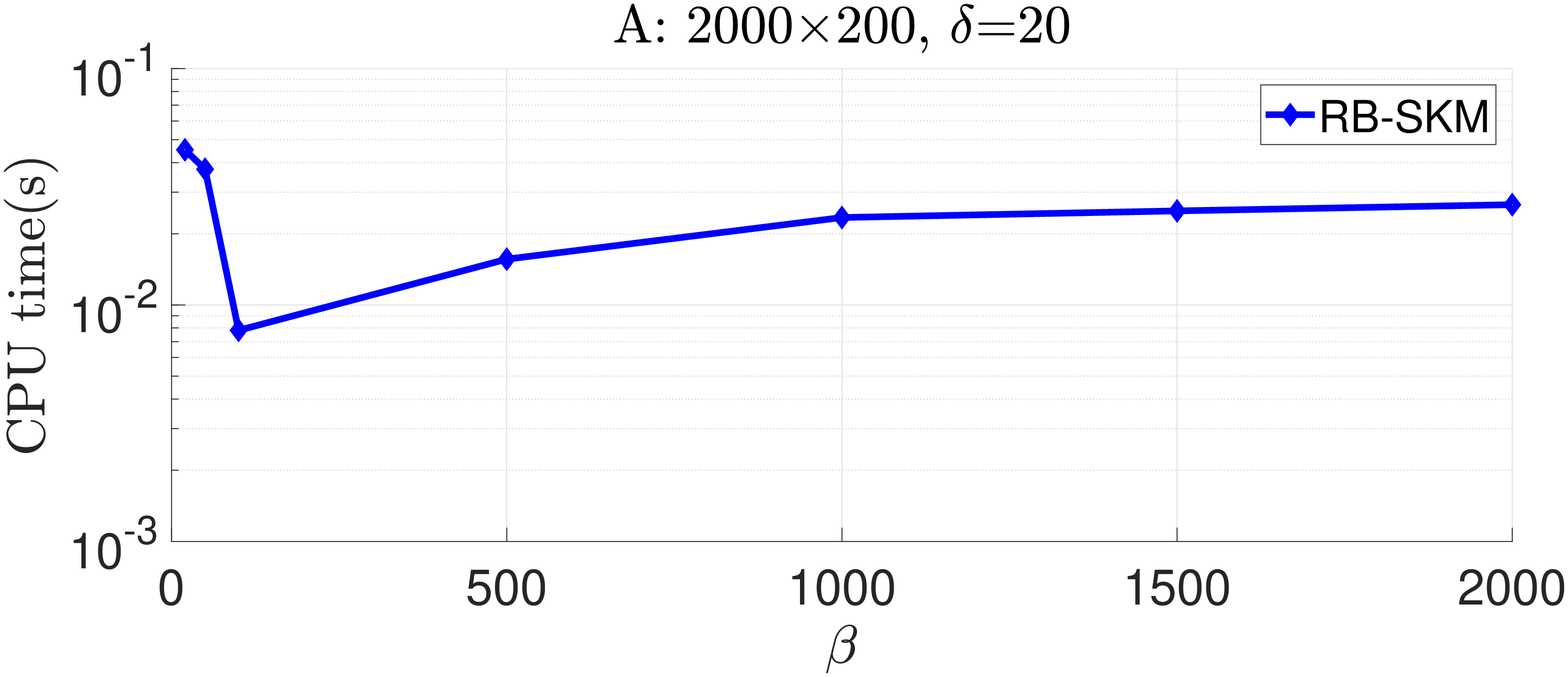}
\includegraphics [height=3.2cm,width=3.7cm ]{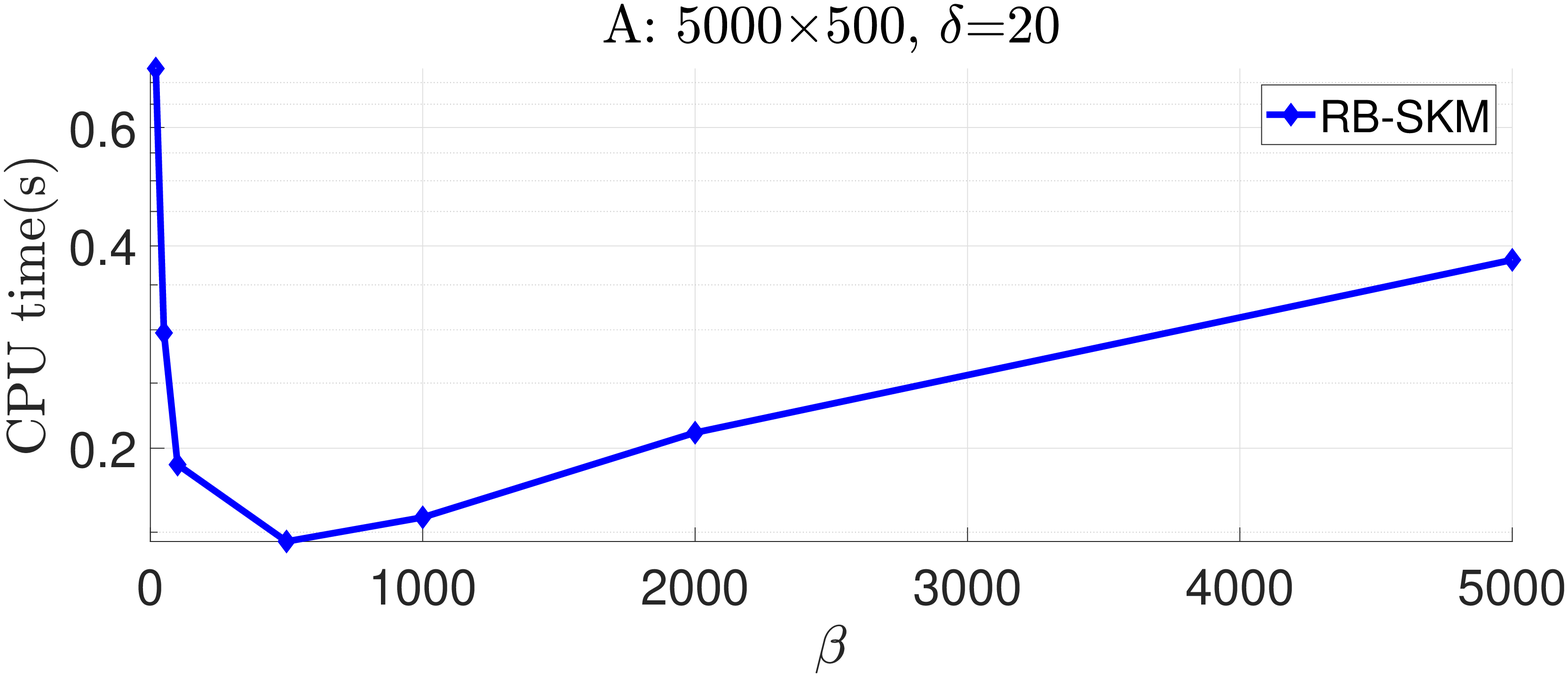}
\includegraphics [height=3.2cm,width=3.7cm ]{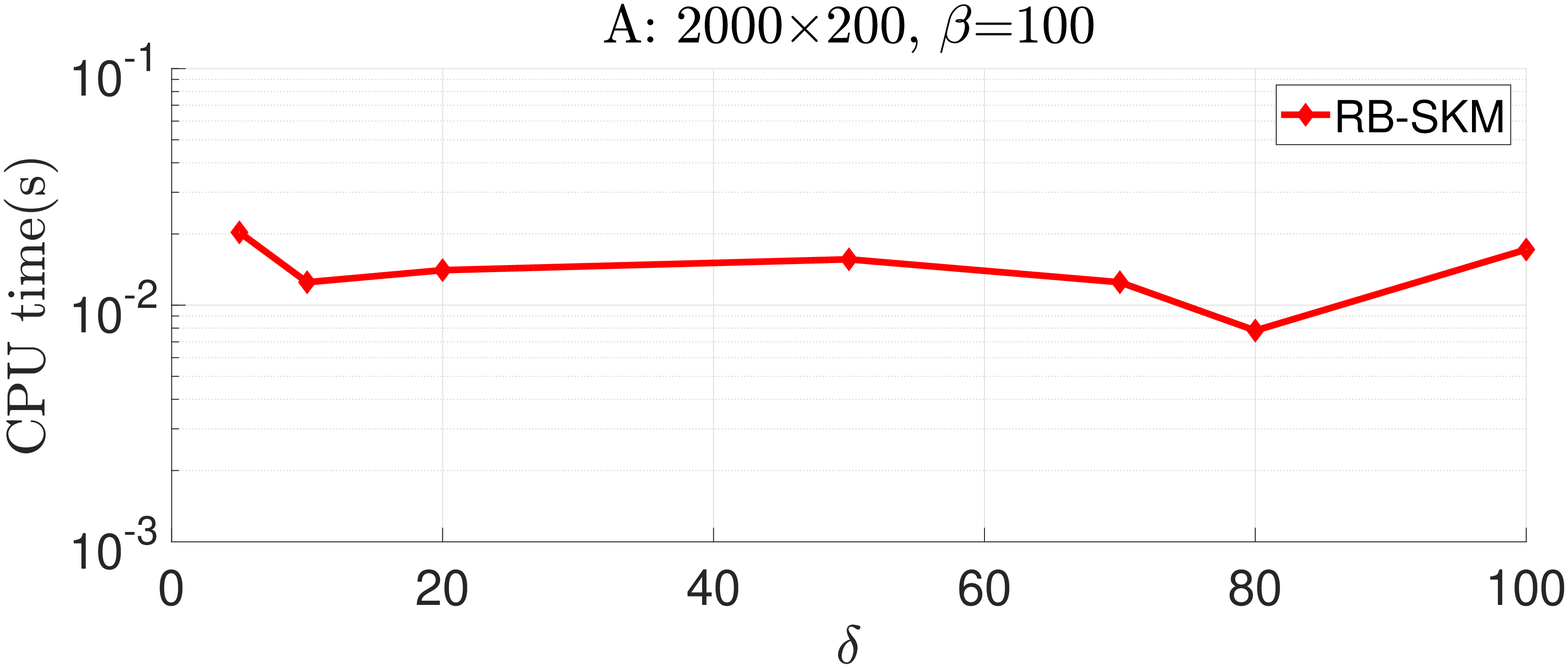}
\includegraphics [height=3.2cm,width=3.7cm ]{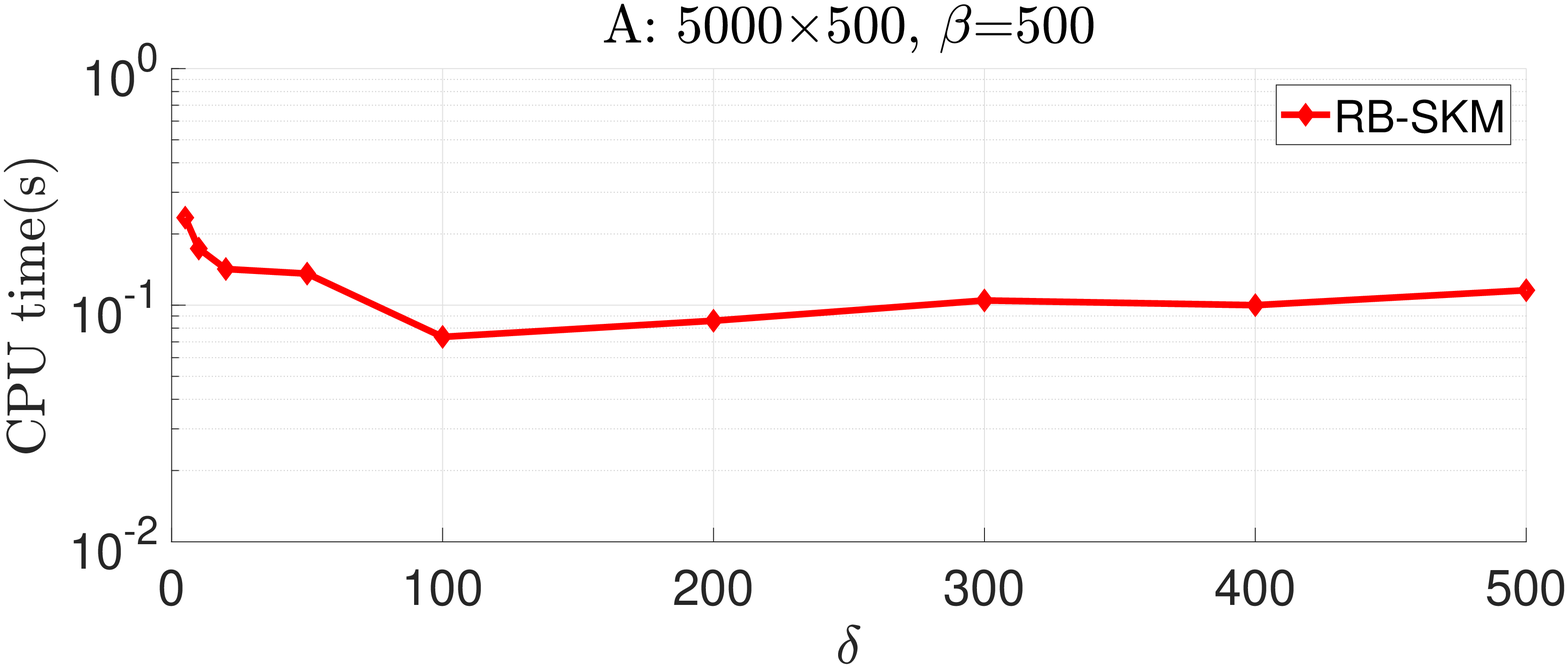}
 \end{center}
\caption{CPU time with varying values of $\beta$ (left) and $\delta$ (right) for the RB-SKM method.}\label{fig_for different parameters}
\end{figure}

\subsection{Comparison of the BSKM2 and RB-SKM methods for realistic practical problems}
\label{sec: Comparison of the BSKM2 and RB-SKM methods}

In \cite{liu2021greedy}, Liu and Gu verified the superiority of the BEM method \cite{liu2021greedy} over the RBK \cite{needell2014paved}, GBK \cite{Niu2020}, GRBK \cite{liu2021greedy} and Motzkin \cite{Motzkin54} methods through a large number of experiments, and compared to the BEM method, the BSKM1 method \cite{zhang2021block} requires less computing time because the latter only needs to pass over the data once to obtain the same iteration index set, while the former needs twice. Furthermore, extensive experiments in \cite{zhang2021block} show that the BSKM1 and BSKM2 methods have almost the same performance, but the former cannot control the size of the index set. 
So, we only need to compare our method with the BSKM2 method in the following. In fact, the left two subgraphs in Figure \ref{fig_for different parameters} have already implied that the RB-SKM method is superior to the BSKM1 method in terms of computing time when choosing  appropriate $\beta$. This is because the RB-SKM method can be approximated as the BSKM1 method when $\beta=m$ shown in Table \ref{Al_rm1:tab1}.

For the BSKM2 and RB-SKM methods, the CGLS method \cite{bjorck1996numerical} is used to avoid calculating the Moore-Penrose pseudoinverse. Thus, from Algorithm \ref{Al: The RB-SKM method}, we can obtain that the RB-SKM method in each iteration needs around $2mn+n+2\beta+ (2\text{nnz}(A_{\mathcal{I}_{k}})+3n+2\delta)\text{IT}_1+2n\delta$ operation counts, where $\text{IT}_1$ is the iteration numbers of the CGLS algorithm. Similarly, we can get that the operation counts needed for the BSKM2 method in each iteration is around $2mn+n+2m+ (2\text{nnz}(A_{\mathcal{J}_{k}})+3n+2\delta)\text{IT}_1+2n\delta$, where $\mathcal{J}_{k}$ is the iteration index set in each iteration. Therefore, our RB-SKM method needs less operation counts compared with the BSKM2 method in each iteration as $\beta\leq m$.

We use realistic practical datasets including the coefficient matrix $A$ and the corresponding right-hand side vector $b$. 
They are all taken from \cite{Davis2011}, and 
have disparate properties, either full-rank or rank-deficient, ill or well conditioned; see details in Table \ref{tab_properties of dataset}. Numerical results are reported in Figures \ref{fig_compare_iteration_with BSKM2-1} to \ref{fig_compare_CPU with BSKM2-1}, which show that our RB-SKM method outperforms the BSKM2 method in terms of iteration numbers, operation counts and CPU time.

\begin{table}[]
\centering
   \fontsize{8}{8}\selectfont
       \caption{Details of the data sets.}
    \label{tab_properties of dataset}
    \begin{tabular}{ c| c c c c  c c}
 \hline
   dataset       &$m$               &  $n$              & Nonzeros      & Condition Number  &Full Rank &  Background            \cr \hline
   well1850      &1850              &  712              & 8755          & 1.11e+02          &true      &  Least Squares Problem \cr
   Maragal\_4    &1964              &  1034             & 26719         & 6.12e+33          &false     &  Least Squares Problem \cr
   illc1033      &1033              &  320              & 4719          & 1.90e+04          &true      &  Least Squares Problem \cr
   lpi\_gran     &2658              &  2525             & 20111         & 1.16e+20          &false     &  Linear Programming Problem\cr\hline
\end{tabular}
\end{table}
\begin{figure}[ht]
 \begin{center}
 \includegraphics[height=3.2cm,width=3.7cm ]{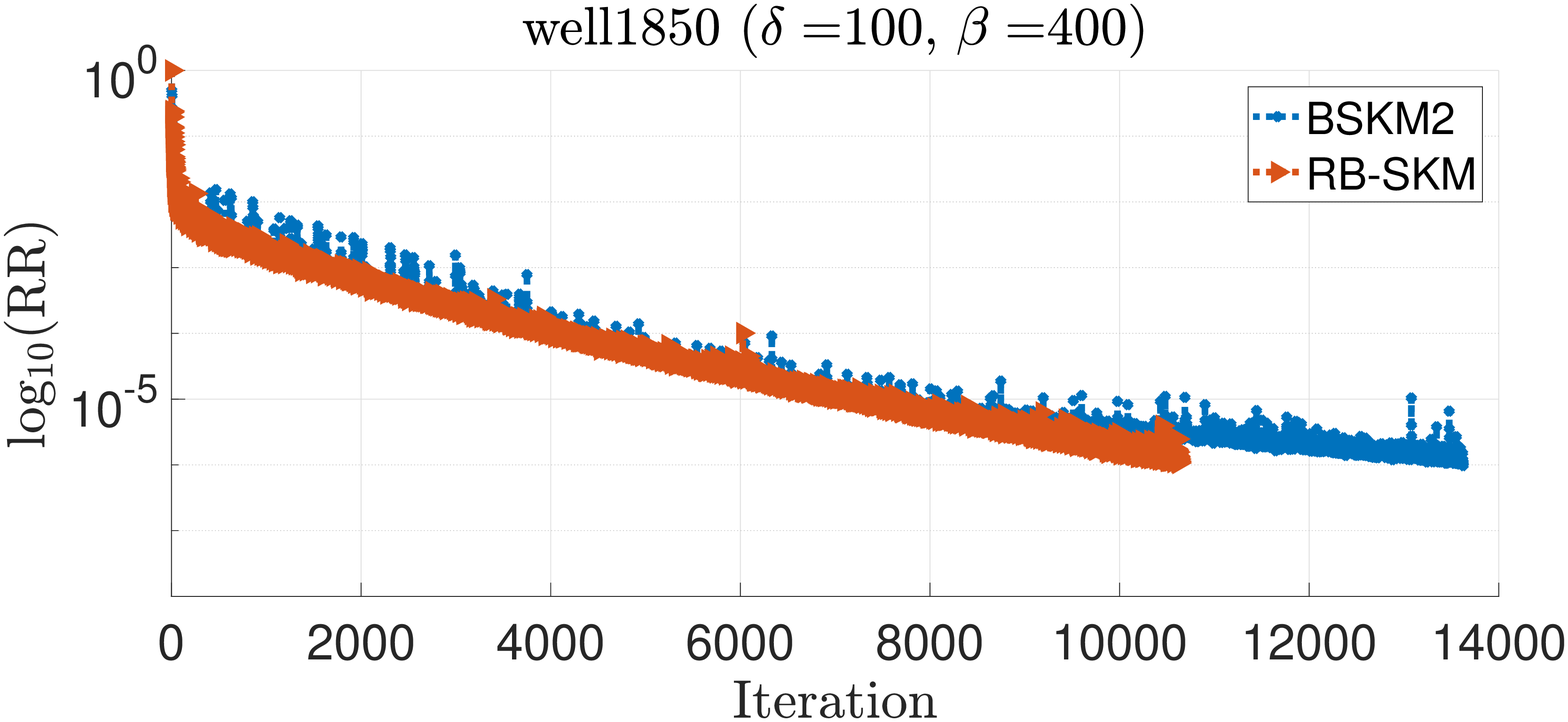}
\includegraphics [height=3.2cm,width=3.7cm ]{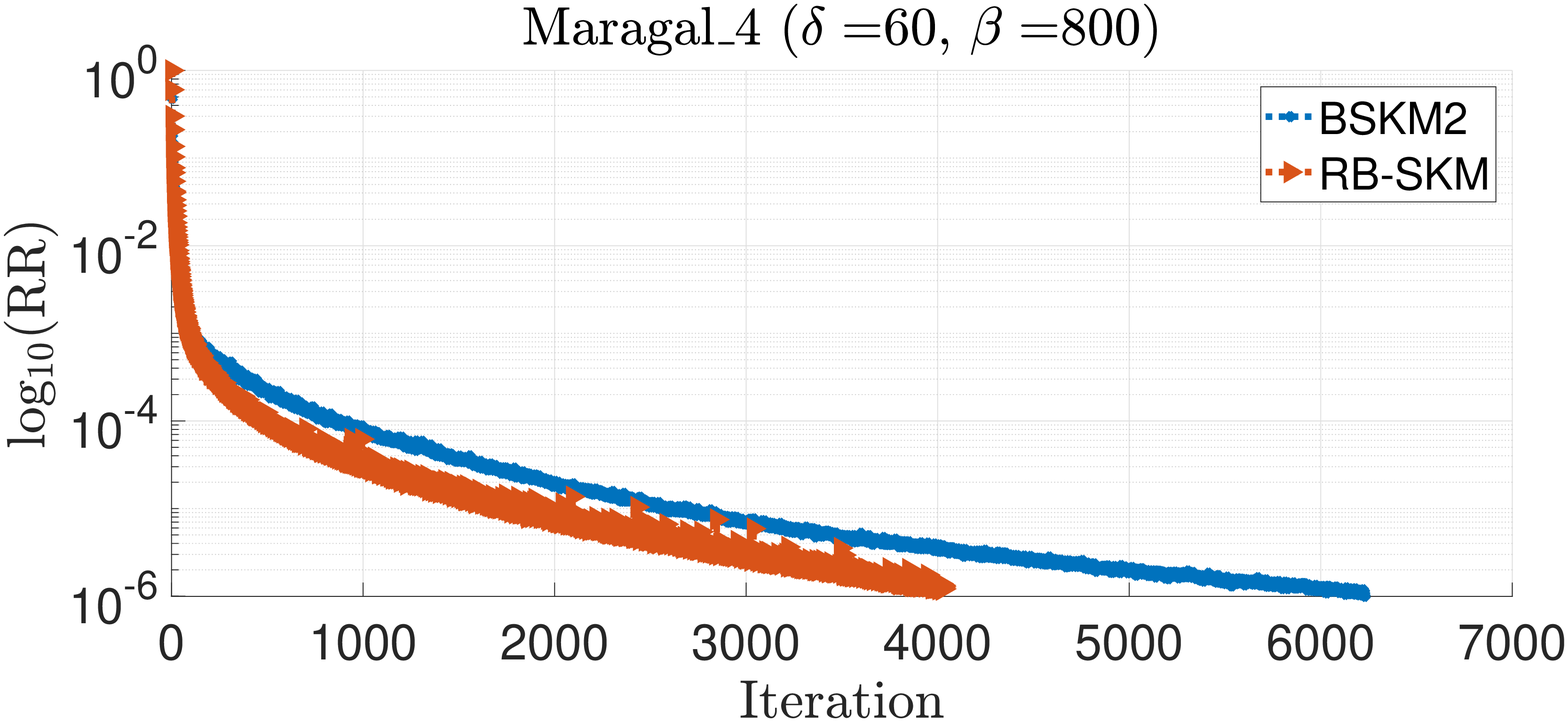}
\includegraphics [height=3.2cm,width=3.7cm ]{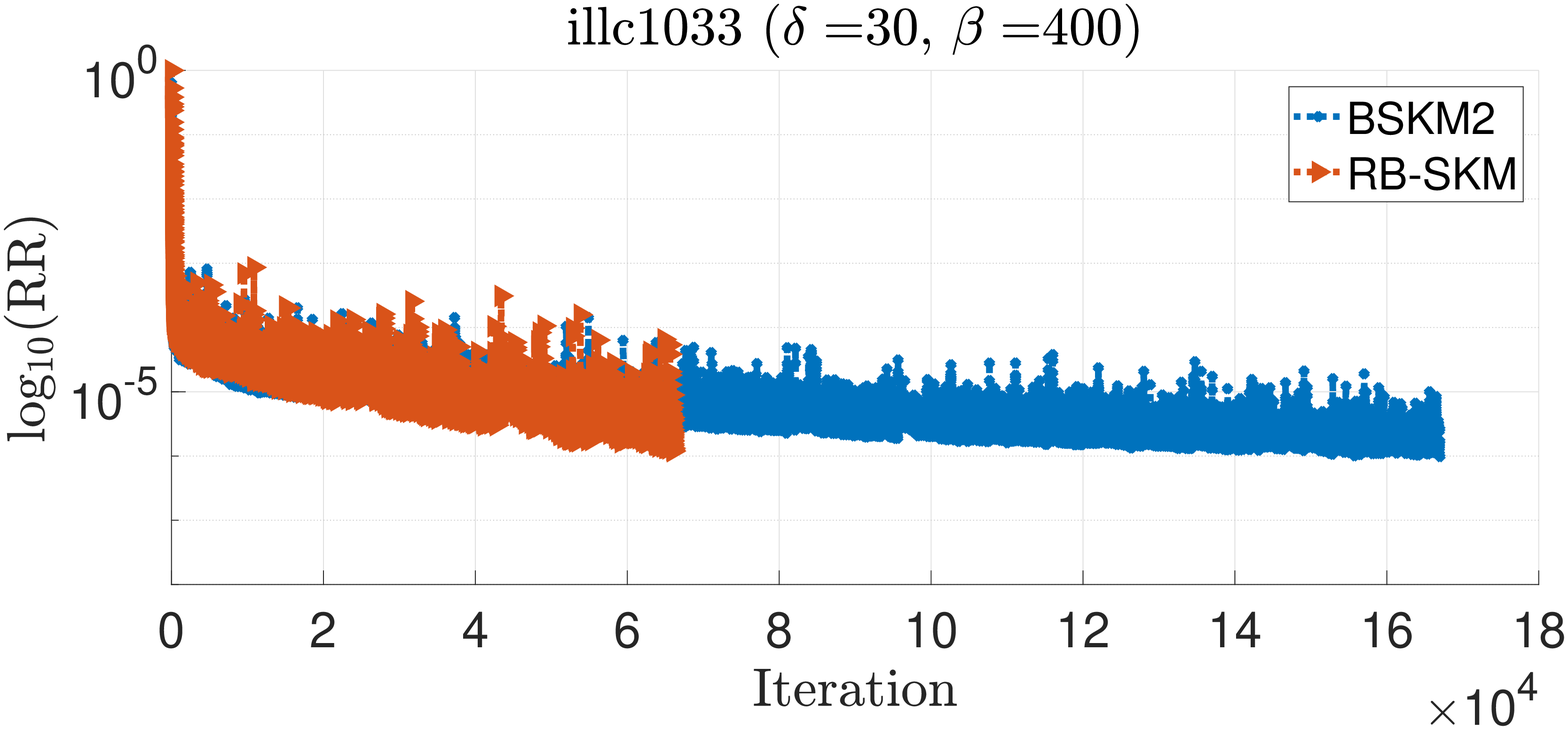}
\includegraphics [height=3.2cm,width=3.7cm ]{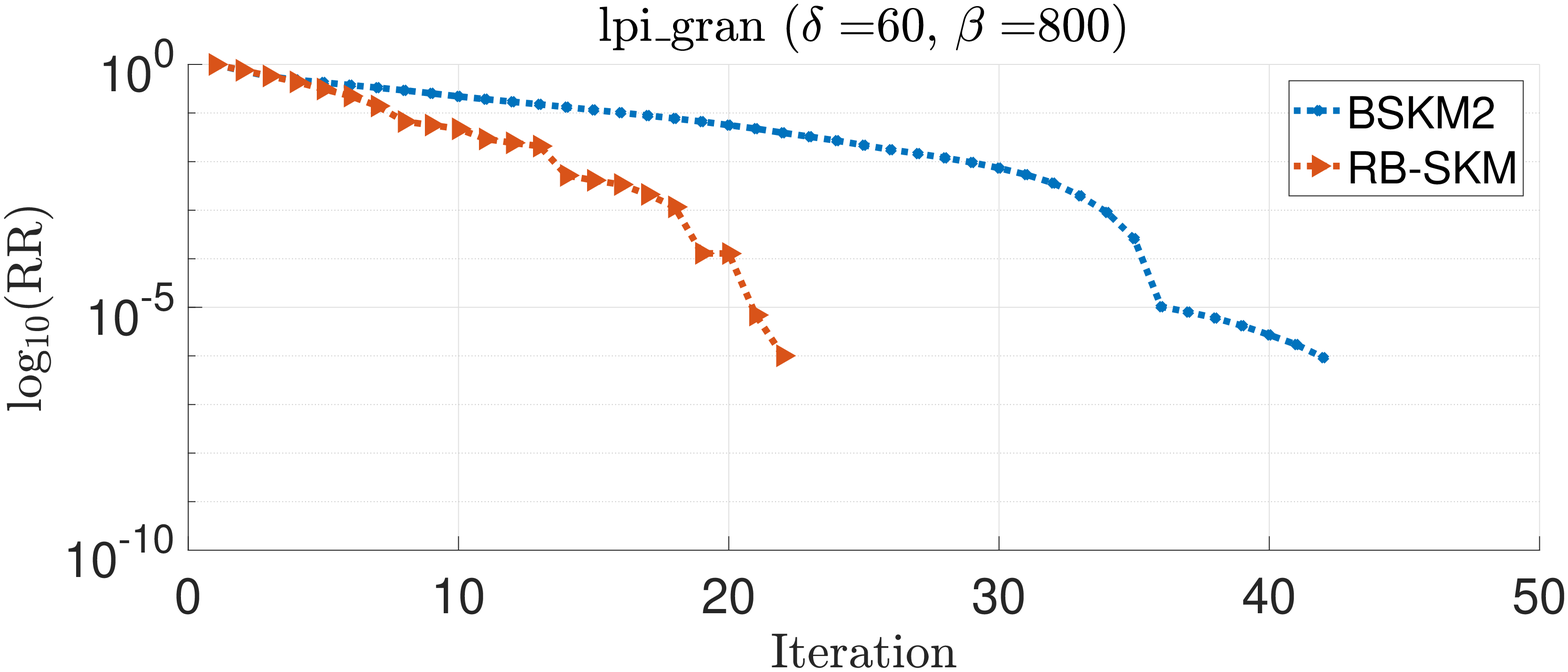}
 \end{center}
\caption{Relative residual versus iteration numbers for the BSKM2 and RB-SKM methods.}\label{fig_compare_iteration_with BSKM2-1}
\end{figure}
 \begin{figure}[ht]
 \begin{center}
\includegraphics [height=3.2cm,width=3.7cm ]{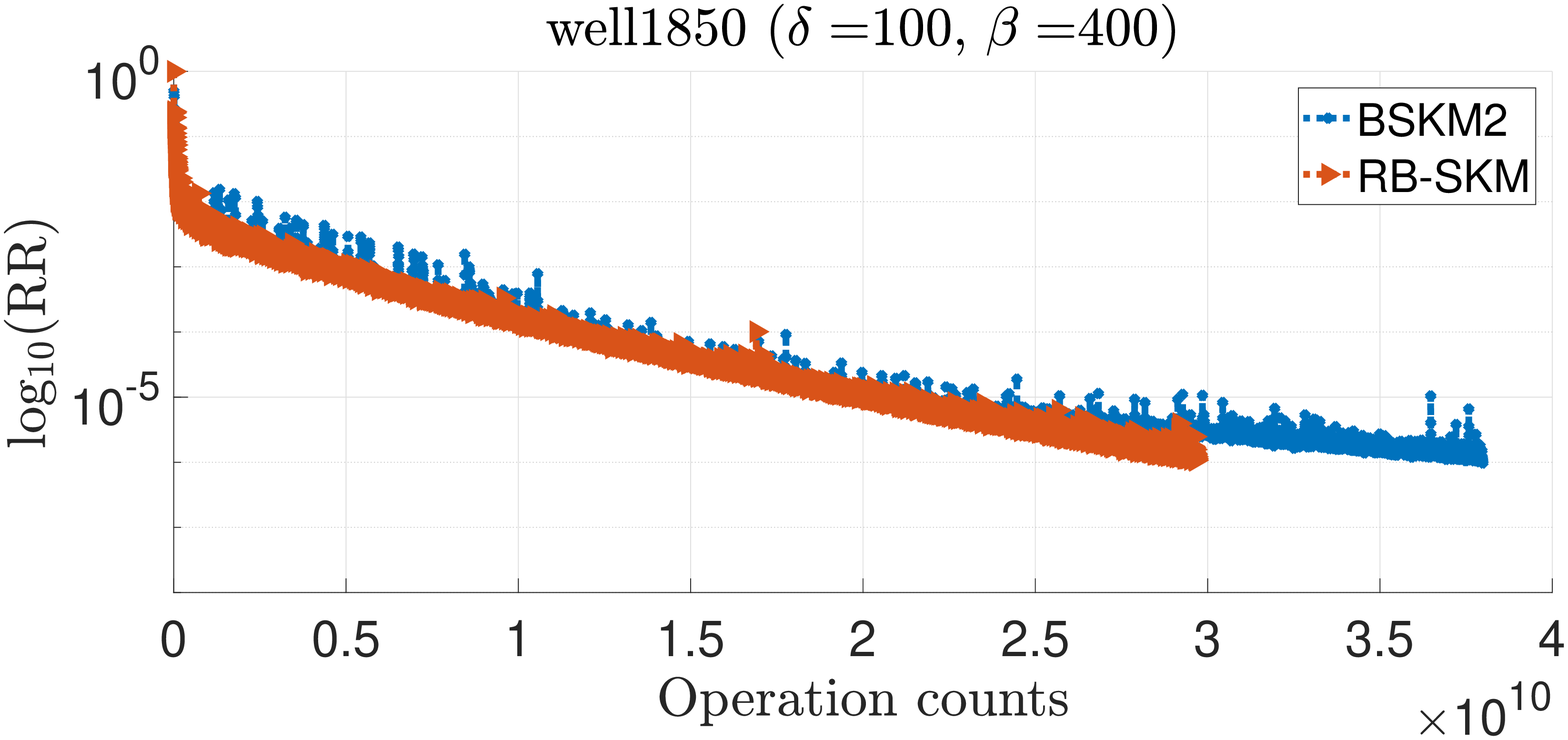}
\includegraphics [height=3.2cm,width=3.7cm ]{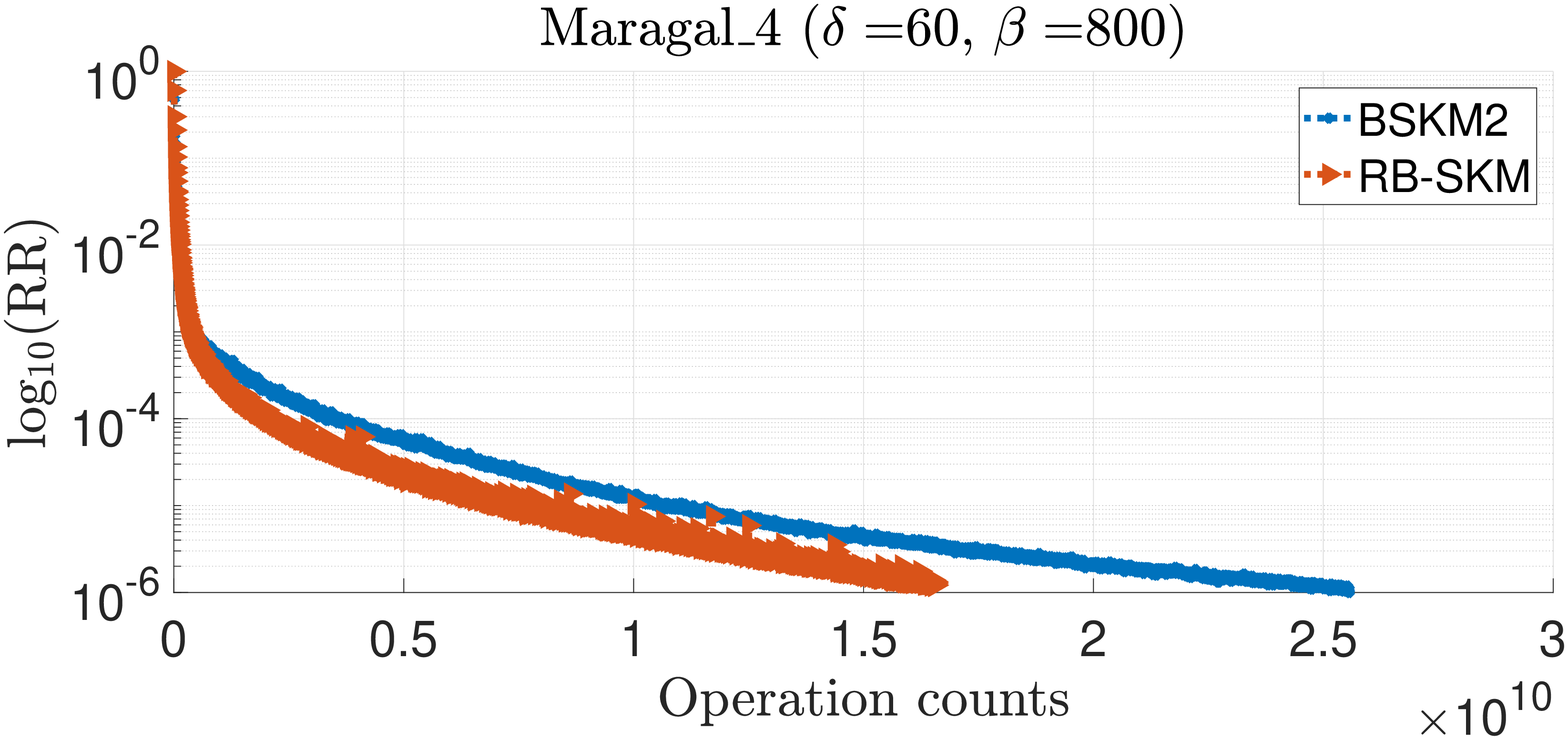}
\includegraphics [height=3.2cm,width=3.7cm ]{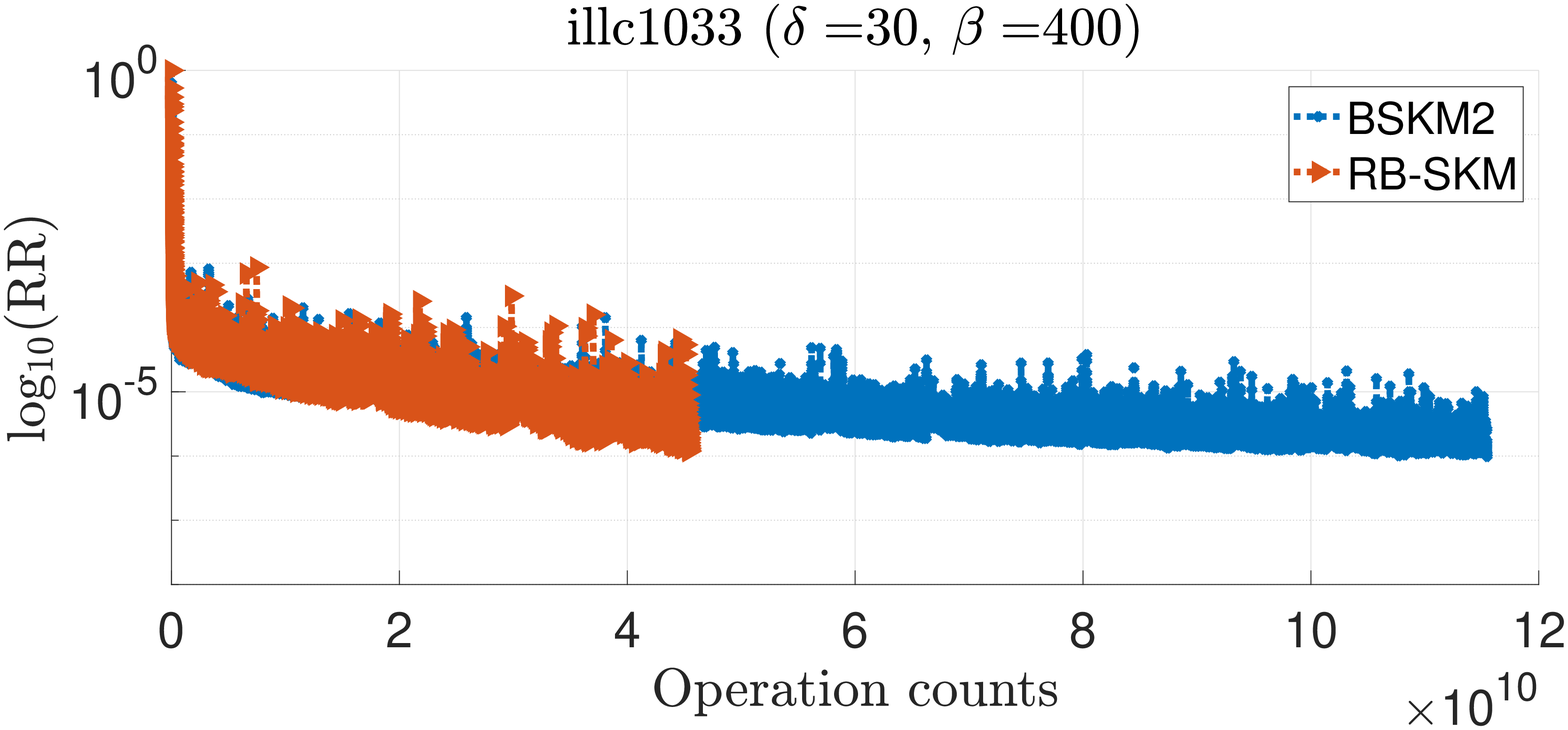}
\includegraphics [height=3.2cm,width=3.7cm ]{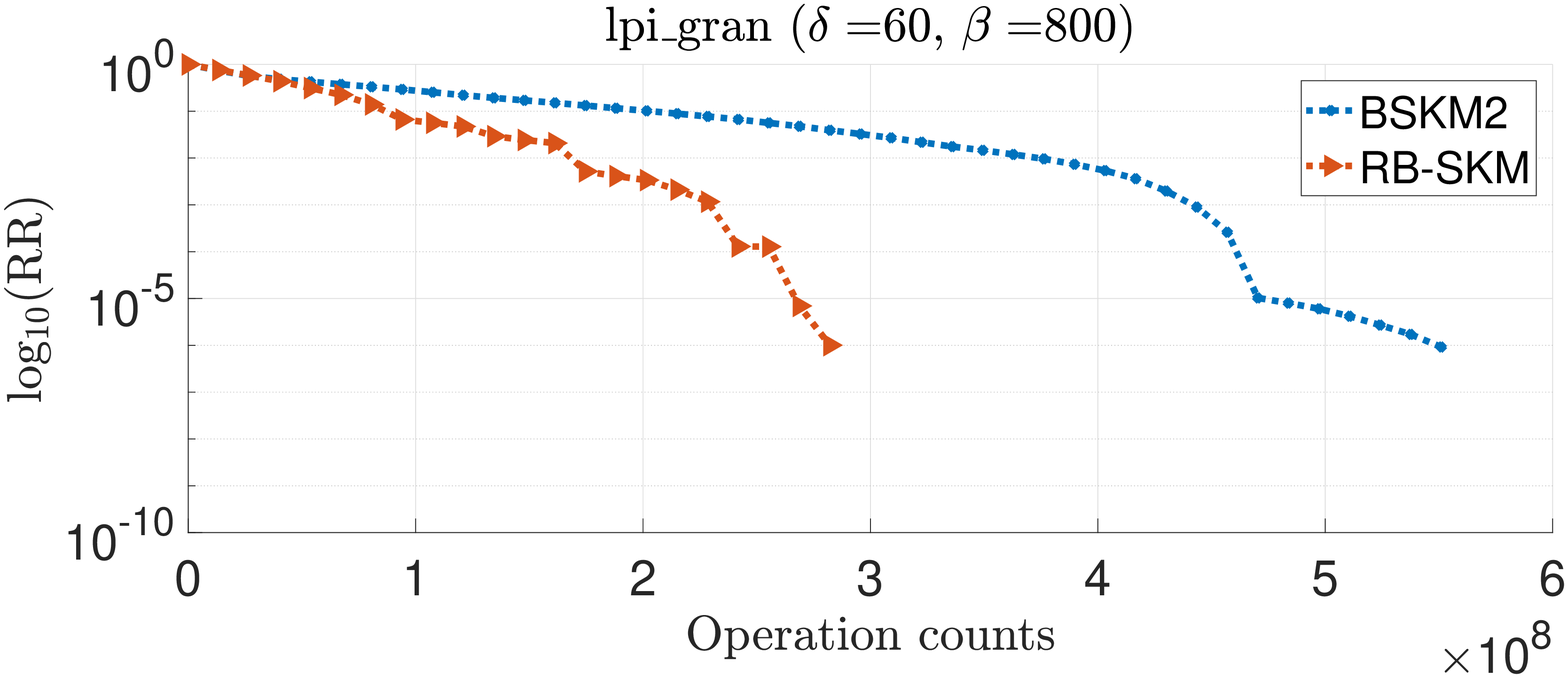}
 \end{center}
\caption{Relative residual versus operation counts for the BSKM2 and RB-SKM methods.}\label{fig_compare_Operation with BSKM2-1}
\end{figure}
\begin{figure}[ht]
 \begin{center}
\includegraphics [height=3.2cm,width=3.7cm ]{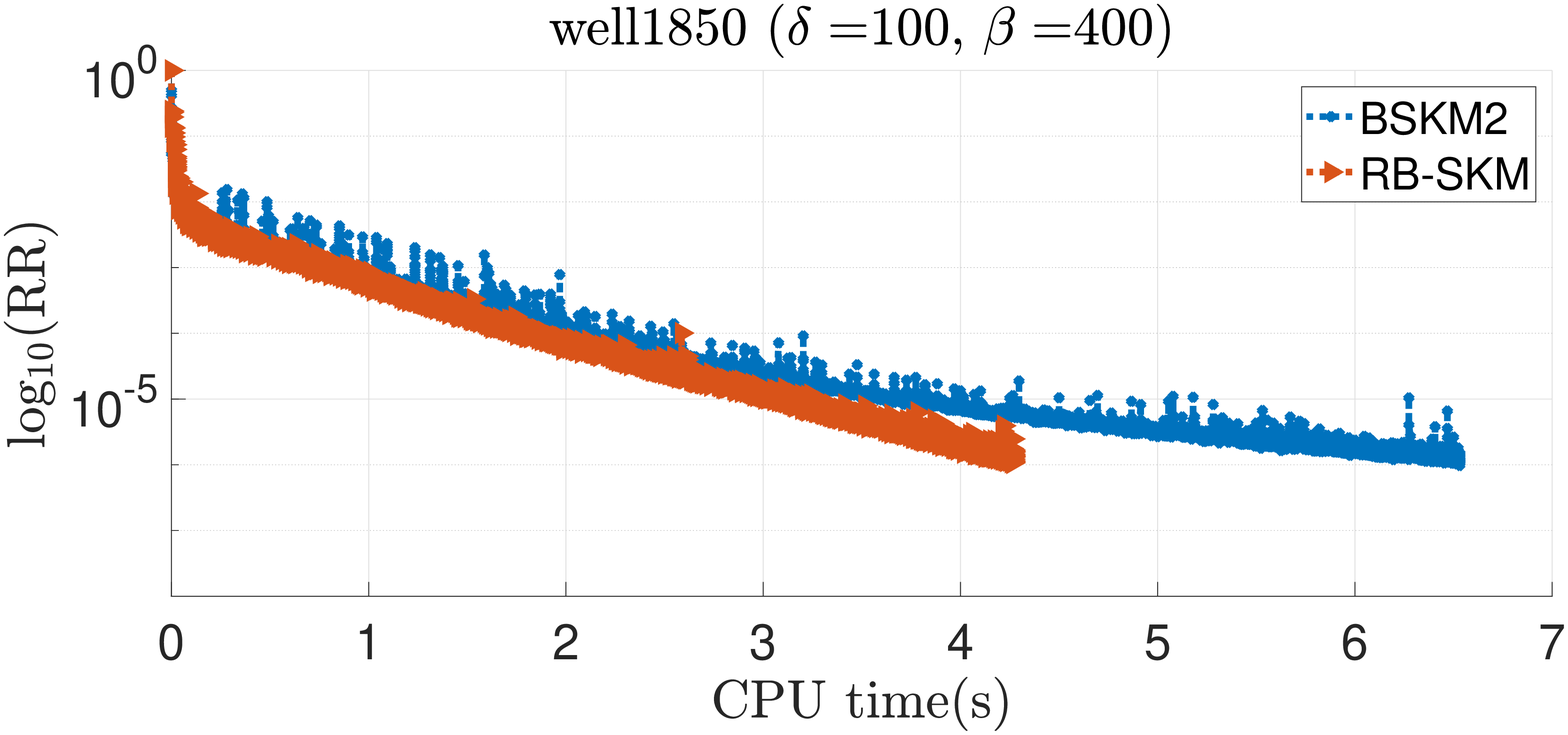}
\includegraphics [height=3.2cm,width=3.7cm ]{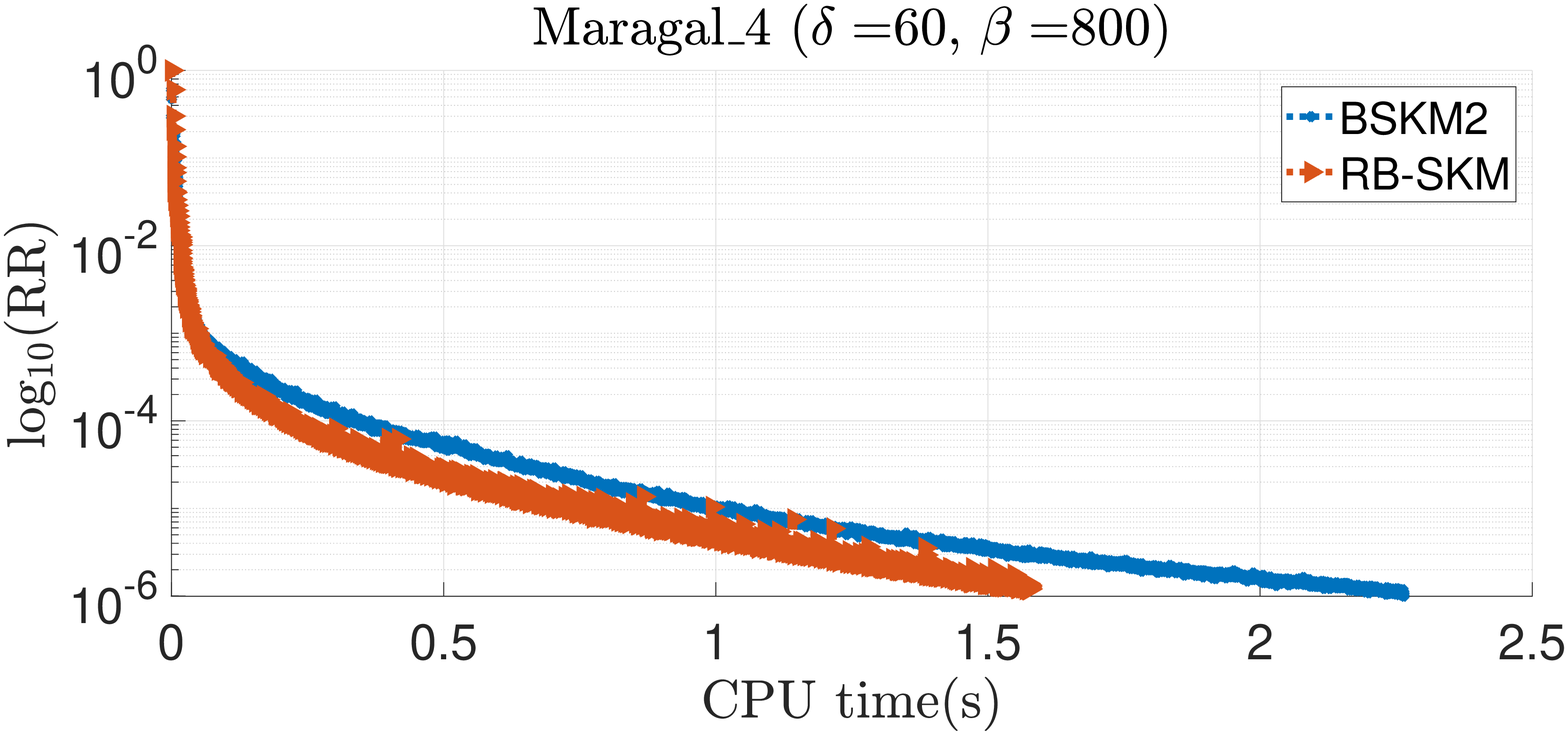}
\includegraphics [height=3.2cm,width=3.7cm ]{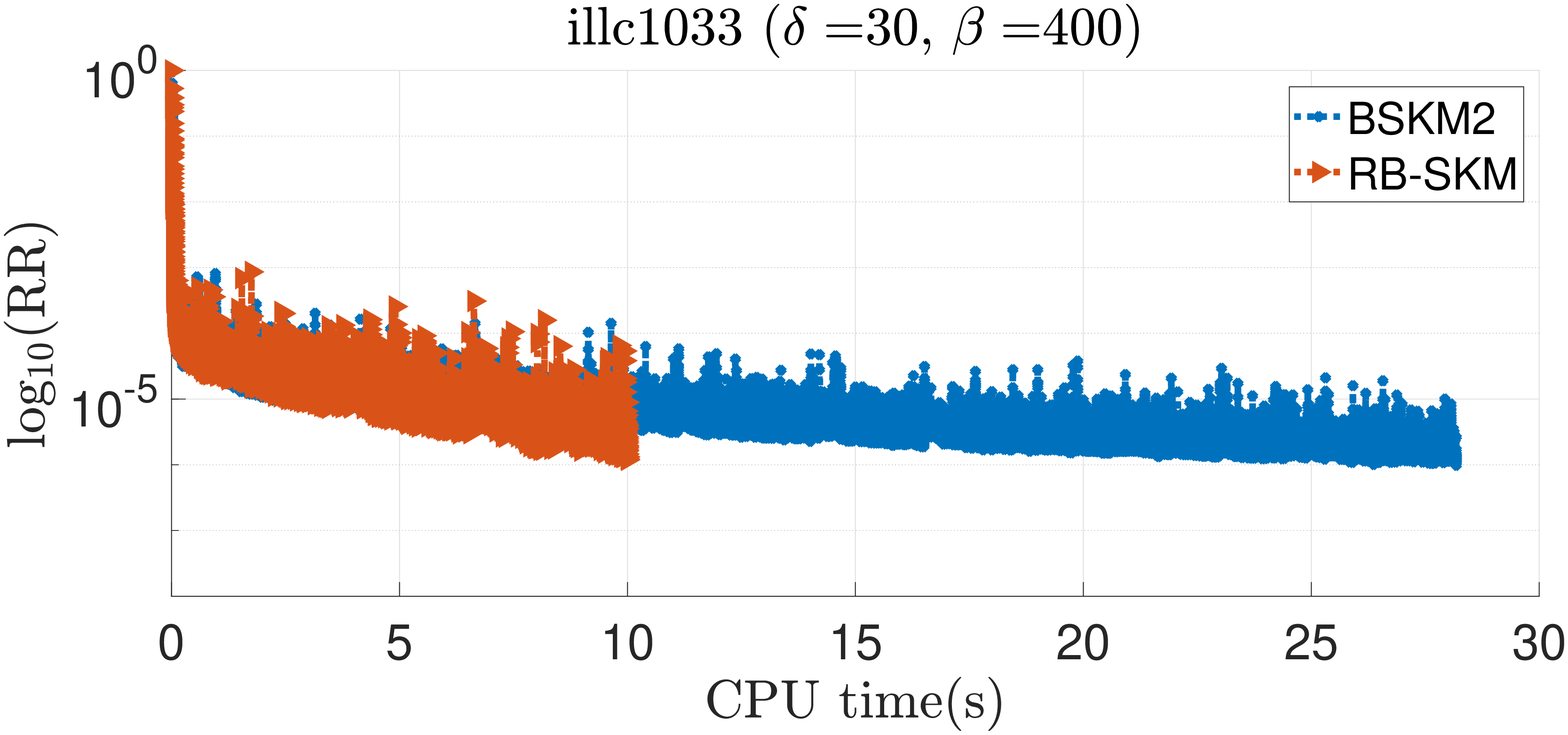}
\includegraphics [height=3.2cm,width=3.7cm ]{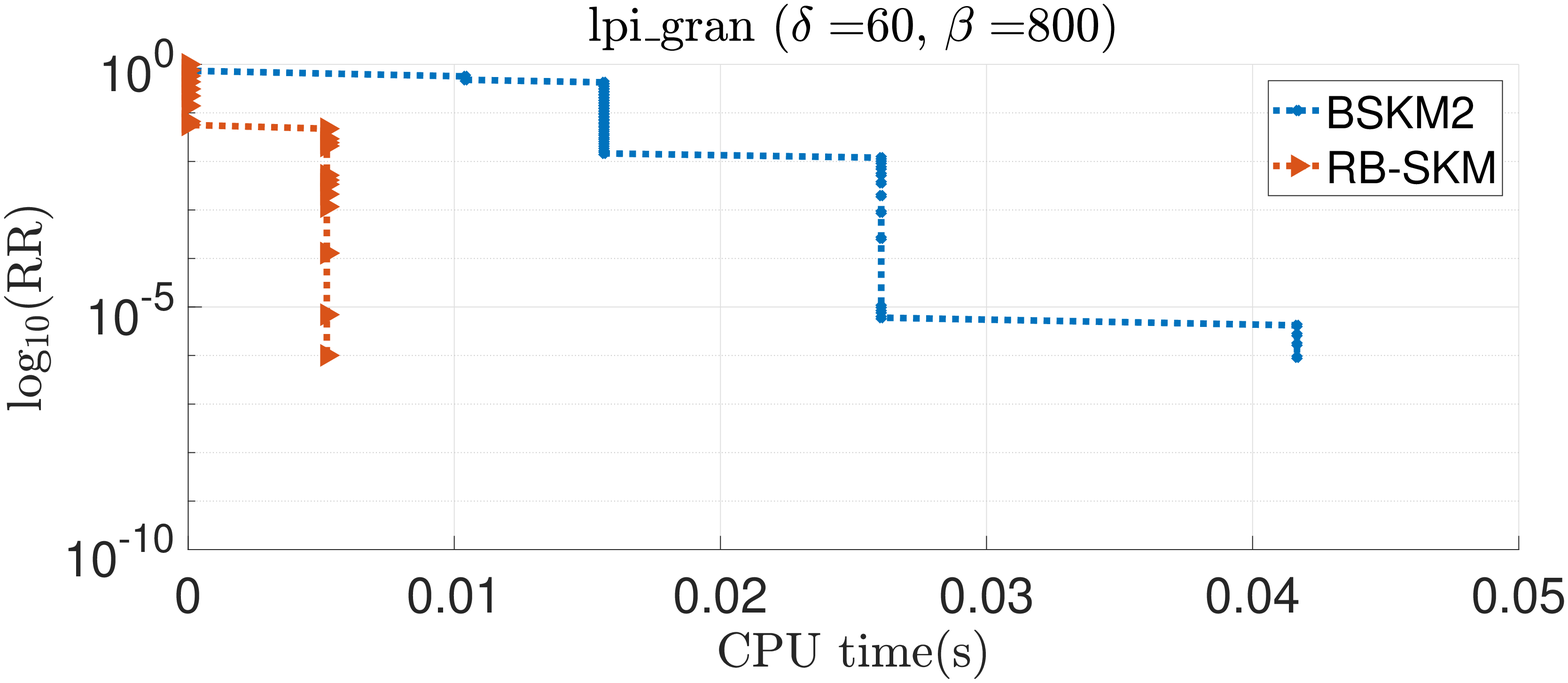}
 \end{center}
\caption{Relative residual versus CPU time for the BSKM2 and RB-SKM methods.}\label{fig_compare_CPU with BSKM2-1}
\end{figure}

\section{Concluding remarks}
\label{sec:conclusions}

This paper mainly proposes the RB-SKM method for solving consistent linear systems. It  
behaves quite well in numerical experiments. So, it is interesting to 
generalize this method for solving other problems such as the inconsistent problems \cite{zouzias2013randomized}, the ridge regression problems \cite{hefny2017rows}, the feasibility problems \cite{de2017sampling}, etc. In addition, how to determine the optimal parameters $\beta$ and $\delta$ is currently unavailable and can be regarded as a 
future work.



\bibliography{mybibfile}

\begin{thebibliography}{10}
\expandafter\ifx\csname url\endcsname\relax
  \def\url#1{\texttt{#1}}\fi
\expandafter\ifx\csname urlprefix\endcsname\relax\def\urlprefix{URL }\fi
\expandafter\ifx\csname href\endcsname\relax
  \def\href#1#2{#2} \def\path#1{#1}\fi

\bibitem{kaczmarz1}
S.~Kaczmarz, Angen\"aherte aufl\"osung von systemen linearer gleichungen, Bull.
  Int. Acad. Pol. Sci. Lett. A 35 (1937) 355--357.

\bibitem{Strohmer2009}
T.~Strohmer, R.~Vershynin, {A randomized Kaczmarz algorithm with exponential
  convergence}, J. Fourier Anal. Appl. 15 (2009) 262--278.

\bibitem{Completion2015}
A.~Ma, D.~Needell, A.~Ramdas, {Convergence properties of the randomized
  extended Gauss-Seidel and Kaczmarz methods}, SIAM J. Matrix Anal. Appl. 36
  (2015) 1590--1604.

\bibitem{gower2015randomized}
R.~M. Gower, P.~Richt{\'a}rik, Randomized iterative methods for linear systems,
  SIAM J. Matrix Anal. Appl. 36 (2015) 1660--1690.

\bibitem{Eldar2011}
Y.~Eldar, D.~Needell, {Acceleration of randomized Kaczmarz method via the
  Johnson-Lindenstrauss lemma}, Numer. Algor. 58 (2011) 163--177.

\bibitem{liu2016accelerated}
J.~Liu, S.~J. Wright, {An accelerated randomized Kaczmarz algorithm}, Math
  Comp. 85 (2016) 153--178.

\bibitem{jiao2017preasymptotic}
Y.~L. Jiao, B.~T. Jin, X.~L. Lu, {Preasymptotic convergence of randomized
  Kaczmarz method}, Inverse Problems 33 (2017) 125012.

\bibitem{de2017sampling}
J.~A. De~Loera, J.~Haddock, D.~Needell, {A sampling Kaczmarz-Motzkin algorithm
  for linear feasibility}, SIAM J. Sci. Comput. 39 (2017) S66--S87.

\bibitem{bai2018greedy}
Z.~Z. Bai, W.~T. Wu, {On greedy randomized Kaczmarz method for solving large
  sparse linear systems}, SIAM J. Sci. Comput. 40 (2018) A592--A606.

\bibitem{gower2021adaptive}
R.~M. Gower, D.~Molitor, J.~Moorman, D.~Needell, On adaptive sketch-and-project
  for solving linear systems, SIAM J. Matrix Anal. Appl. 42 (2021) 954--989.

\bibitem{needell2014paved}
D.~Needell, J.~A. Tropp, {Paved with good intentions: Analysis of a randomized
  block Kaczmarz method}, Linear Algebra Appl. 441 (2014) 199--221.

\bibitem{Niu2020}
Y.~Q. Niu, B.~Zheng, {A greedy block Kaczmarz algorithm for solving large-scale
  linear systems}, Appl. Math. Lett. 104 (2020) 106294.

\bibitem{liu2021greedy}
Y.~Liu, C.~Q. Gu, {On greedy randomized block Kaczmarz method for consistent
  linear systems}, Linear Algebra Appl. 616 (2021) 178--200.

\bibitem{zhang2021block}
Y.~J. Zhang, H.~Y. Li, {Block sampling Kaczmarz-Motzkin methods for consistent
  linear systems}, Calcolo 58 (2021) 39.

\bibitem{Zhang2022MK}
Y.~J. Zhang, H.~Y. Li, {Greedy Motzkin-Kaczmarz methods for solving linear
  systems}, Numer. Linear Algebra Appl. 29 (2022) e2429.

\bibitem{horn2012matrix}
R.~A. Horn, C.~R. Johnson, Matrix Analysis, Cambridge University Press,
  Cambridge, 2012.

\bibitem{Motzkin54}
T.~S. Motzkin, I.~J. Schoenberg, The relaxation method for linear inequalities,
  Canad. J. Math. 6 (1954) 393--404.

\bibitem{necoara2019faster}
I.~Necoara, {Faster randomized block Kaczmarz algorithms}, SIAM J. Matrix Anal.
  Appl. 40 (2019) 1425--1452.

\bibitem{du2020randomized}
K.~Du, W.~T. Si, X.~H. Sun, {Randomized extended average block Kaczmarz for
  solving least squares}, SIAM J. Sci. Comput. 42 (2020) A3541--A3559.

\bibitem{du2021doubly}
K.~Du, X.~H. Sun, {A doubly stochastic block Gauss-Seidel algorithm for solving
  linear equations}, Appl. Math. Comput. 408 (2021) 126373.

\bibitem{chen2022fast}
J.~Q. Chen, Z.~D. Huang, {On a fast deterministic block Kaczmarz method for
  solving large-scale linear systems}, Numer. Algor. 89 (2022) 1007--1029.

\bibitem{haddock2021greed}
J.~Haddock, A.~Ma, {Greed works: An improved analysis of sampling
  Kaczmarz-Motzkin}, SIAM J. Math. Data Sci. 3 (2021) 342--368.

\bibitem{haddock2019motzkin}
J.~Haddock, D.~Needell, {On Motzkin's method for inconsistent linear systems},
  BIT Numer. Math. 59 (2019) 387--401.

\bibitem{bjorck1996numerical}
{\AA}.~Bj{\"o}rck, Numerical Methods for Least Squares Problems, SIAM,
  Philadelphia, 1996.

\bibitem{Davis2011}
T.~A. Davis, Y.~F. Hu, {The university of Florida sparse matrix collection},
  ACM. Trans. Math. Softw. 38 (2011) 1--25.

\bibitem{zouzias2013randomized}
A.~Zouzias, N.~M. Freris, {Randomized extended Kaczmarz for solving least
  squares}, SIAM J. Matrix Anal. Appl. 34 (2013) 773--793.

\bibitem{hefny2017rows}
A.~Hefny, D.~Needell, A.~Ramdas, {Rows versus columns: Randomized Kaczmarz or
  Gauss-Seidel for ridge regression}, SIAM J. Sci. Comput. 39 (2017)
  S528--S542.

\end{thebibliography}

\end{document}